\def\bt{\begin{thm}}
\def\et{\end{thm}}
\def\bl{\begin{lem}}
\def\el{\end{lem}}
\def\bd{\begin{defn}}
\def\ed{\end{defn}}
\def\bc{\begin{cor}}
\def\ec{\end{cor}}
\def\bp{\begin{proof}}
\def\ep{\end{proof}}
\def\br{\begin{rem}}
\def\er{\end{rem}}
\newtheorem{thm}{Theorem}[section]
\newtheorem{prop}[thm]{Proposition}
\newtheorem{lem}[thm]{Lemma}
\newtheorem{defn}[thm]{Definition}
\newtheorem{example}[thm]{Example}
\newtheorem{rem}[thm]{Remark}
\newtheorem{cor}[thm]{Corollary}
\numberwithin{equation}{section}
\newcommand{\C}{\Bbb{C}^m}
\newcommand{\R}{\Bbb{R}}
\newcommand{\E}{\Bbb{E}}
\newcommand{\pn}{\mathcal{P}_n}
\newcommand{\bthm}{\begin{thm}}
\newcommand{\ethm}{\end{thm}}
\newcommand{\bstp}{\begin{stp}}
\newcommand{\estp}{\end{stp}}
\newcommand{\blemma}{\begin{lemma}}
\newcommand{\elemma}{\end{lemma}}
\newcommand{\bprop}{\begin{prop}}
\newcommand{\eprop}{\end{prop}}
\newcommand{\bpf}{\begin{pf}}
\newcommand{\epf}{\end{pf}}
\newcommand{\bdefn}{\begin{defn}}
\newcommand{\edefn}{\end{defn}}
\newcommand{\brk}{\begin{rmrk}}
\newcommand{\erk}{\end{rmrk}}
\newcommand{\bcrl}{\begin{crl}}
\newcommand{\ecrl}{\end{crl}}
\title[Mass equidistribution]{ Mass equidistribution for random polynomials}
\author{Turgay Bayraktar} 
\thanks{T.\ Bayraktar is partially supported by T\"{U}B\.{I}TAK grants B\.{I}DEB-2232/118C006, ARDEB-3501/118F049 and Science Academy, Turkey BAGEP grant.}
\address{Faculty of Engineering and Natural Sciences, Sabanc{\i} University, \.{I}stanbul, Turkey}
\email{tbayraktar@sabanciuniv.edu}
\date{\today}
\keywords{Random polynomial, equidistribution of zeros, equilibrium measure, global extremal function, Bergman kernel asymptotics}
\subjclass[2000]{32A60,32A25,60D05}
\begin{document}

\begin{abstract}
 The purpose of this note is to study asymptotic zero distribution of multivariate random polynomials as their degrees grow. For a smooth weight function with super logarithmic growth at infinity, we consider random linear combinations of associated orthogonal polynomials with subgaussian coefficients. This class of probability distributions contains a wide range of random variables including standard Gaussian and all bounded random variables. We prove that for almost every sequence of random polynomials their normalized zero currents  become equidistributed with respect to a deterministic extremal current. The main ingredients of the proof are Bergman kernel asymptotics, mass equidistribution of random polynomials and concentration inequalities for subgaussian quadratic forms. 
  \end{abstract}
\maketitle


\section{Introduction}

Let $\varphi:\C\to \Bbb{R}$ be a $\mathscr{C}^{1,1}$ \textit{weight function} (i.e.\ $\varphi$ is differentiable
and all of its first partial derivatives are locally Lipschitz continuous) satisfying  
 \begin{equation}\label{growth}
\varphi(z)\geq (1+\epsilon)\log\|z\|\ \text{for}\ \|z\|\gg 1
\end{equation} for some fixed $\epsilon>0.$ 
We define an inner product on the space $\pn$ of multi-variable polynomials of degree at most $n$ by setting 
\begin{equation}\label{in}
\langle p,q\rangle_n:=\int_{\C}p(z)\overline{q(z)}e^{-2n\varphi(z)}dV_m(z)
\end{equation} where $dV_m$ denotes the Lebesgue measure on $\C$. We also let $\{P_j^n\}_{j=1}^{d_n}$ be a fixed orthonormal basis (ONB) for $\mathcal{P}_n$ with respect to the inner product (\ref{in}).
A \textit{random polynomial} is of the form
$$f_n(z)=\sum_{j=1}^{d_n}c^n_jP_j^n(z)$$ where $c_j^n$ are independent identically distributed (iid) real or complex subgaussian random variables (see \S\ref{gauss1}) and $d_n:=\dim(\mathcal{P}_n)={n+m\choose n}$.  This allows us to endow $\pn$ with a $d_n$-fold product probability measure $Prob_n$ induced by the probability law of $c_j^n$. We also consider the product probability space $\prod_{n=1}^{\infty}(\pn, Prob_n)$ whose elements are sequences of random polynomials of increasing degree. We are interested in limiting distribution of zeros of random polynomials.

 In the present setting, the choice of weight function $\varphi$ determines a weighted global extremal function $\varphi_e$ (see \ref{exfunc}) which induces a weighted equilibrium measure $\mu_e$ (see \ref{eqm}) whose support is a compact set denoted by $S_{\varphi}$. The following result indicates that for a typical (in the sense of probability) sequence $\{f_n\}_{n=1}^{\infty}$ of random polynomials the masses (respectively, normalized zero currents) are asymptotic to the equilibrium measure (respectively, the extremal current):

\begin{thm}\label{main}
Let $\varphi:\C\to \Bbb{R}$ be a $\mathscr{C}^{1,1}$-weight function satisfying (\ref{growth}). Assume that random coefficients $c_j^n$ are iid real or complex subgaussian random variables of mean zero and unit variance. Then almost surely in $\prod_{n=1}^{\infty}(\pn, Prob_n)$ the masses 
\begin{equation}
\frac{1}{d_n}|f_n(z)|^2e^{-2n\varphi(z)}dV_m \to d\mu_{\varphi_e}
\end{equation} in the weak-star topology of measures on $S_{\varphi}$.
 Moreover, almost surely in $\prod_{n=1}^{\infty}(\pn, Prob_n)$ the normalized currents of integrations
 $$\frac1n[Z_{f_n}] \to dd^c\varphi_e$$ in the sense of currents.
\end{thm}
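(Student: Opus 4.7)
The plan is to prove the mass statement first, via concentration of the random quadratic form $|f_n(z)|^2$ around its mean (the weighted Bergman kernel on the diagonal), and then to deduce the zero-current statement by standard pluripotential arguments applied to the potentials $u_n := \tfrac{1}{n}\log|f_n|$.

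For the mass equidistribution, I would fix a continuous test function $\chi$ on $S_\varphi$ (and eventually range over a countable dense family) and expand $f_n=\sum_j c_j^n P_j^n$ to write
\begin{equation*}
\int \chi(z)\,\frac{|f_n(z)|^2}{d_n}e^{-2n\varphi(z)}dV_m \;=\; \frac{1}{d_n}\langle A_n^{\chi} c^n, c^n\rangle,
\end{equation*}
where $A_n^{\chi}$ is the $d_n\times d_n$ Hermitian matrix with entries $[A_n^{\chi}]_{jk}=\int \chi\, P_j^n\,\overline{P_k^n}\,e^{-2n\varphi}\,dV_m$. Its normalized trace is
\begin{equation*}
\frac{\operatorname{tr}(A_n^{\chi})}{d_n}=\int \chi(z)\cdot\frac{K_n(z,z)}{d_n}e^{-2n\varphi(z)}dV_m,
\end{equation*}
which tends to $\int \chi\,d\mu_{\varphi_e}$ by the weighted Bergman kernel asymptotics. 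To upgrade this mean convergence to almost sure convergence I would invoke a Hanson--Wright concentration inequality for subgaussian quadratic forms,
\begin{equation*}
\Pr\!\Big(\big|\tfrac{1}{d_n}\langle A_n^{\chi} c^n,c^n\rangle-\tfrac{1}{d_n}\operatorname{tr}(A_n^{\chi})\big|>\delta\Big)\le 2\exp\!\Big(\!-c\min\!\big(\tfrac{\delta^2 d_n^2}{\|A_n^{\chi}\|_F^2},\tfrac{\delta\, d_n}{\|A_n^{\chi}\|_{\mathrm{op}}}\big)\Big).
\end{equation*}
Since $A_n^{\chi}$ is the matrix of multiplication by $\chi$ restricted to $\mathcal{P}_n$, one has $\|A_n^{\chi}\|_{\mathrm{op}}\le\|\chi\|_{\infty}$ and $\|A_n^{\chi}\|_F^2\le d_n\|\chi\|_{\infty}^2$, giving a bound of the form $2\exp(-c'(\delta,\chi)\,d_n)$. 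Because $d_n=\binom{n+m}{n}\asymp n^m$ this is summable in $n$, and Borel--Cantelli together with separability of $C(S_\varphi)$ yields the weak-$\ast$ convergence almost surely.

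For the zero-current statement, the Poincar\'e--Lelong formula gives $\tfrac{1}{n}[Z_{f_n}]=dd^c u_n$, so it suffices to show $u_n\to\varphi_e$ in $L^1_{\mathrm{loc}}$ almost surely. The Cauchy--Schwarz bound $|f_n(z)|^2\le\|c^n\|^2 K_n(z,z)$, combined with the almost sure estimate $\tfrac{1}{n}\log\|c^n\|^2\to 0$ (standard subgaussian law of large numbers for $\sum|c_j^n|^2$) and the Bergman asymptotic upper bound $\tfrac{1}{2n}\log K_n(z,z)\le\varphi_e(z)+o(1)$, yields a locally uniform upper bound $u_n\le\varphi_e+o(1)$ almost surely. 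Standard compactness for locally bounded-above psh families gives precompactness of $\{u_n\}$ in $L^1_{\mathrm{loc}}$; every subsequential limit $u$ is psh with $u\le\varphi_e$ a.e. A strict inequality $u<\varphi_e$ on a set of positive Lebesgue measure would, by Hartogs' lemma, force $u_n\le\varphi_e-\eta$ on a nearby compact set for $n$ along the subsequence, creating an asymptotic mass defect there that contradicts the equidistribution just proved. Hence $u=\varphi_e$ a.e., $u_n\to\varphi_e$ in $L^1_{\mathrm{loc}}$, and $dd^c u_n\to dd^c\varphi_e$ as currents.

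The crux is the Hanson--Wright step: one requires a concentration rate of order $\exp(-cd_n)$, which is why uniform operator- and Frobenius-norm bounds for $A_n^{\chi}$ are essential---a plain Chebyshev argument would be too weak to sum along the sequence of degrees and would not survive Borel--Cantelli. A secondary subtlety is the identification $u=\varphi_e$ in the current step, where turning mass convergence into potential convergence relies on the \emph{exponential} discrepancy between $u_n$ and any strictly smaller limit, and must be combined carefully with the Bergman-kernel upper envelope.
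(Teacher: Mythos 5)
Your mass-equidistribution argument is essentially the paper's: expand the mass integral as a subgaussian quadratic form in the coefficients, identify the normalized trace via the diagonal Bergman asymptotics, and apply Hanson--Wright with $t=\delta d_n$ plus Borel--Cantelli and a countable dense family in $C(S_\varphi)$. Your bound $\|A_n^{\chi}\|_{HS}^2\le d_n\|\chi\|_\infty^2$ (from the eigenvalue bound $|\mu_j|\le\|\chi\|_\infty$) is a legitimate, even simpler, substitute for the paper's computation $Tr((T_n^g)^2)=O(d_n)$ via the off-diagonal kernel asymptotics. Two small omissions there: the density argument needs uniform control of the total masses $\frac{1}{d_n}\|f_n\|_n^2$ (the paper gets this almost surely by a separate Borel--Cantelli estimate on $\|(c_j^n)\|^2$; alternatively apply your concentration to $g\equiv 1$), and precompactness of $u_n=\frac1n\log|f_n|$ in $L^1_{loc}$ is not automatic from a local upper bound --- one must also rule out $u_n\to-\infty$ locally uniformly, which again comes from the mass statement on a ball inside $Int(S_\varphi)$.

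The genuine gap is in the deduction of the zero-current statement. Your contradiction argument ("$u<\varphi_e$ on a set of positive Lebesgue measure forces, via Hartogs, $u_n\le\varphi_e-\eta$ on a nearby compact set, hence an asymptotic mass defect contradicting equidistribution") only works when that compact set meets $Int(S_\varphi)$: the convergence you proved is weak-star on $S_\varphi$, and $\mu_{\varphi_e}$ charges no set outside $S_\varphi$, so on compacts disjoint from the support the masses tend to zero anyway and smallness of $u_n$ there contradicts nothing. (Moreover off $S_\varphi$ one only has $\varphi_e\le\varphi$, so $u_n\le\varphi_e-\eta$ does not even translate into a quantitative statement about $|f_n|e^{-n\varphi}$ that the mass hypothesis sees.) Thus your argument identifies the subsequential limit $u$ with $\varphi_e$ only on $Int(S_\varphi)$, not on $\C$. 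This is exactly the gap the paper points out in Zelditch's argument and repairs: from $u\le\varphi_e$ everywhere and $u=\varphi_e$ on $Int(S_\varphi)$ one concludes that the non-pluripolar Monge--Amp\`ere measure $NP(dd^cu)^m$ has full mass $1$ concentrated where $u=\varphi_e$ (using $\int_{Int(S_\varphi)}d\mu_{\varphi_e}=1$), and then Dinew's generalized domination principle (Corollary \ref{dom}, for Lelong-class functions of full Monge--Amp\`ere mass) yields $\varphi_e\le u$, hence equality on all of $\C$ and the $L^1_{loc}$ convergence. Without this domination step (or some substitute), the passage from mass equidistribution on $S_\varphi$ to $\frac1n[Z_{f_n}]\to dd^c\varphi_e$ as currents on $\C$ is unjustified; the paper's unit-circle example ($p_n(z)=z^n$, $\frac1n\log|p_n|=\log|z|\ne\log^+|z|$) illustrates why mass information on the support alone cannot pin down the potential globally.
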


Distribution of zeros of random polynomials is a classical subject which goes back to Kac \cite{Kac} and Hammersley \cite{Ham} among others. A classical result due to Kac and Hammersley asserts that normalized zeros of Kac random polynomials (i.e.\ $\sum_{j=0}^nc_jz^j$ with iid Gaussian coefficients) of large degree tend to accumulate on the unit circle $S^1=\{|z|=1\}.$ This ensemble of random polynomials has been extensively studied (see eg. \cite{LO,ET,SV,HN,IZ,Pritsker} and references therein). In \cite{SZ}, Shiffman and Zelditch obtained a far reaching generalization of the aforementioned result in the line bundle setting. Following \cite{SZ}, asymptotic distribution of zero divisors of multi-variable random polynomials with random coefficients has been studied by various authors. In \cite{Bloom1, BloomS} Bloom and Shiffman (see also \cite{B4}) considered random polynomials with Gaussian coefficients. Random polynomials with non-Gaussian coefficients were also considered by various authors (see eg. \cite{DS3,BloomL,B6,B7, BloomD} among others). In \cite{B10} for radially symmetric weight functions, we provided a necessary and sufficient condition on random coefficients for equilibrium distribution of zero divisors of random polynomials (see also \cite{BCM} for the line bundle setting). We refer the reader to the recent survey \cite[\S2]{BCHM} for the state of the art. 

Mass asymptotics have been considered by several authors in various geometric settings. Given a compact K\"ahler manifold $(M,\omega)$ of dimension $m$ and a positive Hermitian holomorphic line bundle $(L,h)$ whose curvature form $c_1(L,h)=\omega$, one can define a scalar $L^2$-product and a norm on the vector space of global holomorphic sections $H^0(X,L^{\otimes n})$ by
$$\|s\|_n^2:=\int_{M}|s(x)|^2_{h^{\otimes n}}dV$$ where $dV$ is the probability volume form induced by $\omega$.
 In \cite{SZ} Shiffman and Zelditch proved that for a sequence $s_n\in H^0(X,L^{\otimes n})$ of global holomorphic sections of unit norm if their \textit{masses}
 $$|s_n(x)|^2_{h^{\otimes n}}dV\to dV$$ in the weak-star topology of measures on $M$, the normalized zero currents $\frac{1}{n}[Z_{s_n}]$ (along zero divisors of $s_n$) converge weakly to the curvature form $\omega$ (see \S\ref{lineb} for details). This was first observed by Nonnenmacher and Voros \cite{NV} in the case of the theta bundle over an elliptic curve $\Bbb{C}/\Bbb{Z}^2$. In a different direction, Rudnick \cite{Rudnick} proved a similar result in the setting of $SL_2(\Bbb{Z})$ modular cusp forms of weight $2n$. The latter corresponds to the case of positive line bundle on a non-compact Riemann surface. All of the aforementioned results are based on potential theory. 

In the $\C$ setting the result of \cite{SZ} corresponds to the case where the weight function is a K\"ahler potential (e.g. $\varphi(z)=\frac12\log[1+\|z\|^2]$). The later implies that the weighted equilibrium measure is the restriction of a volume form defined on the complex projective space $\Bbb{P}^m$.  More recently, Zelditch \cite{Zel} obtained a generalization to partially positive metrics on positive line bundles. However, the argument in \cite[Theorem 1.2]{Zel} has a gap. Namely, it  proves the $L^1_{loc}$ convergence of the potentials only in the support of the Monge-Amp\`ere measure. In order to complete the proof one needs to use a generalized domination principle (Theorem \ref{domp}). We adapt the argument in \cite{Zel} to the current setting and fill this gap. Moreover, we generalize the mass equidistribution of random polynomials with subgaussian coefficients by using Hanson-Wright inequality \cite{RuVe} for subgaussian quadratic forms.

The outline of the paper is as follows: In \S\ref{S2} we prove that mass asymptotics implies equilibrium distribution of zeros (Theorem \ref{basic}). In \S\ref{S3}, we review near and off diagonal Bergman kernel asymptotics in the special case $Y=\C$ and a $\mathscr{C}^{1,1}$-weight function $\varphi:\C\to\Bbb{R}$ that has super logarithmic growth at infinity and apply these results to study associated Toeplitz operators and distribution of their eigenvalues.  In \S\ref{S4}, we prove Theorem \ref{main}. In $\S\ref{S5}$ we discuss analogous results for random orthonormal bases. Finally, in \S\ref{lineb} we give a generalization of Theorem \ref{main} to the line bundle setting.

\section{Mass Asymptotics of Weighted Polynomials}\label{S2}
Let $Y\subset \C$ be a closed set and $\varphi:Y\to \Bbb{R}$ be a continuous \textit{weight function}. If $Y$ is unbounded we assume that  there exists $\epsilon>0$ such that
\begin{equation}\label{gr}
\varphi(z)\geq (1+\epsilon)\log\|z\|\ \text{for}\ \|z\|\gg1.
\end{equation}
Following \cite[Appendix B]{SaffTotik} we denote the \textit{weighted global extremal function}
\begin{equation}\label{exfunc}
V_{Y,\varphi}(z):=\sup\{u(z): u\in \mathcal{L}(\C), u\leq \varphi\ \text{on}\ Y\}
\end{equation}
where $\mathcal{L}(\C)$ denotes the \textit{Lelong class} of pluri-subharmonic (psh) functions $u$ that satisfies $$u(z)-\log^+\|z\|=O(1)$$ where $\log^+=\max(\log,0).$ We remark that when $Y$ is compact and $\varphi\equiv0$ (i.e.\ in the unweighted case) the extremal function defined in (\ref{exfunc}) is the pluri-complex Green function of $Y$ (cf.\ \cite{Klimek}) and denoted by $V_Y$. We also denote by
$$\mathcal{L}^+(\Bbb{C}^m):=\{u\in \mathcal{L}(\Bbb{C}^m): u(z)\geq \log^+\|z\|+C_u\ \text{for some}\ C_u\in \Bbb{R}\}.$$
In what follows, we let 
 $$g^*(z):=\limsup_{w\to z}g(w)$$
denote the upper semi-continuous regularization of $g$. Seminal results of Siciak and Zaharyuta (see \cite[Appendix B]{SaffTotik} and references therein) assert that $V^*_{Y,\varphi}\in\mathcal{L}^+(\Bbb{C}^m)$ and that  $V_{Y,\varphi}$ verifies
 \begin{equation}\label{envelope}
 V_{Y,\varphi}(z)=\sup\{\frac{1}{\deg p}\log|p(z)|:p\ \text{is a polynomial and}\ \sup_{z\in Y}|p(z)|e^{-deg(p)\varphi(z)}\leq 1\}.
 \end{equation}
 For $r>0$ let us denote $Y_r:=\{z\in Y:\|z\|\leq r\}$. It is well-known that $V_{Y,\varphi}=V_{Y_r,\varphi}$ for sufficiently large $r$ (\cite[Appendix B, Lemma 2.2]{SaffTotik}). 
 
 A closed set $Y\subset \C$ is said to be \textit{locally regular at} $w\in Y$  if for every $\rho>0$ the extremal function $V_{Y\cap \overline{B(w,\rho)}}(z)$ is continuous at $w$. The set $Y$ is called locally regular if $Y$ is locally regular at every $w\in Y$. A classical result of Siciak \cite{Siciak} asserts that if $Y$ is locally regular and $\varphi$ is continuous weight function then the weighted extremal function $V_{Y,\varphi}$ is also continuous and hence $V_{Y,\varphi}=V_{Y,\varphi}^*$ on $\C$. In the rest of this section we assume that $Y$ is a locally regular closed set.

The psh function $V_{Y,\varphi}$ is locally bounded on $\C$ and hence by Bedford-Taylor theory \cite{BT1,BT2} the \textit{weighted equilibrium measure} $$\mu_{Y,\varphi}:=\frac{1}{m!}(dd^cV_{Y,\varphi})^m$$ is well-defined and does not put any mass on pluripolar sets. Here; $d=\partial+\overline{\partial}$ and $d^c:=\frac{i}{2\pi}(\overline{\partial}-\partial)$ so that $dd^c=\frac{i}{\pi}\partial\overline{\partial}$ and 
\begin{equation}\int_{\C}\big(dd^c\frac12\log[1+\|z\|^2]\big)^m=1.\end{equation}
Moreover, denoting the support $S_{Y,\varphi}:=supp(\mu_{Y,\varphi})$ by \cite[Appendix B]{SaffTotik} we have $$S_{Y,\varphi}\subset \{z\in Y: V_{Y,\varphi}(z)=\varphi(z)\}.$$ Thus, the support $S_{Y,\varphi}$ is a compact set.  We denote its interior (as a subset of $\C$) by $Int(S_{Y,\varphi})$. An important example is $\varphi(z)=\frac{\|z\|^2}{2}$ which in turn gives $\mu_{Y,\varphi}=\mathbbm{1}_{B}dV_m$ where $\mathbbm{1}_{B}$ denotes the characteristic function of the unit ball in $\C$.

A locally finite measure $\nu$ is called a Bernstein-Markov (BM) measure for the weighted set $(Y,\varphi)$ if
for sufficiently large $r>0$ the triple $(Y_r,Q, \nu)$ satisfies the weighted Bernstein-Markov inequality. That is, there is $M_n\geq 1$ such that $\displaystyle\limsup_{n\to\infty} M_n^{1/n}=1$ and  
\begin{equation}\label{bm}
 \|pe^{-n\varphi}\|_{Y_r}:=\max_{z\in Y_r}|p(z)|e^{-n\varphi(z)}\leq M_n \|pe^{-n\varphi}\|_{L^2(\nu)}\ \forall p\in\pn.
\end{equation}
 If $Y$ is an unbounded, we also require
\begin{equation}\label{gr2}
 \int_{Y\setminus Y_r}\frac{1}{\|z\|^a}d\nu<\infty\ \text{for some}\ a>0.
\end{equation}
The conditions (\ref{gr}) and (\ref{gr2}) ensure that the weighted measure $e^{-2n\varphi}d\nu$ has finite moments up to order $n$. Whereas condition (\ref{bm}) implies that $L^2$ and $sup$ norms of weighted polynomials  are  asymptotically equivalent. We also remark that BM-measures always exist (see eg. \cite{BMsurvey}). 

\subsection{Domination Principle}
Let $X=\Bbb{P}^m$ be the complex projective space and $\omega$ denotes the Fubini-Study K\"ahler form normalized by $\int_X\omega^m=1$. We also denote the set of all $\omega$-psh functions by
$$PSH(X,\omega)=\{\phi\in L^1(X)|\ \phi \ \text{usc}\ \text{and}\ \omega+dd^c\phi\geq 0\}.$$

Following \cite{GZ1}, we define \textit{non-pluripolar Monge-Amp\'ere} of $\phi\in PSH(X,\omega)$ by
\begin{equation} 
MA(\phi):=\lim_{j\to \infty}\{\mathbbm{1}_{\{\phi>-j\}}(\omega+dd^c\max[\phi,-j])^m\}.
\end{equation}
It follows from  \cite{GZ1} that the $MA(\phi)$ is a (positive) Borel measure satisfying 
\begin{equation}\label{min}
\int_X MA(\phi)\leq \int_X\omega^m=1.
\end{equation}

\begin{defn}
We denote
$$\mathcal{E}(X,\omega):=\{\phi\in PSH(X,\omega)\ | \int_XMA(\phi)=1\}.$$
\end{defn}
Then we have the following generalized domination principle due to Dinew \cite{Dinew}:

\begin{thm}\label{domp}
Let $\psi\in PSH(X,\omega)$ and $\phi\in \mathcal{E}(X,\omega)$ that satisfy $\psi\leq \phi$ a.e. with respect to $MA(\phi).$ Then $\psi\leq \phi$ on  $X$.
\end{thm}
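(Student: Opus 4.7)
The plan is to argue via a small shift combined with the uniqueness of the non-pluripolar Monge-Amp\`ere equation in $\mathcal{E}(X,\omega)$. Fix $\epsilon > 0$ and set
\begin{equation*}
u_\epsilon := \max(\psi - \epsilon, \phi) \in PSH(X,\omega),
\end{equation*}
so that $u_\epsilon \geq \phi$ pointwise. The first step is to verify that $u_\epsilon \in \mathcal{E}(X,\omega)$: this uses the monotonicity of non-pluripolar Monge-Amp\`ere on the full mass class (any $\omega$-psh majorant of a full-mass function is again of full mass), which forces $\int_X MA(u_\epsilon) = 1$.

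Next, I would exploit the plurifine locality of the non-pluripolar operator $MA(\cdot)$. On the plurifine open set $\{\psi - \epsilon > \phi\}$ one has $u_\epsilon = \psi - \epsilon$, so locality together with translation invariance gives
\begin{equation*}
\mathbbm{1}_{\{\psi - \epsilon > \phi\}}\, MA(u_\epsilon) = \mathbbm{1}_{\{\psi - \epsilon > \phi\}}\, MA(\psi);
\end{equation*}
symmetrically $u_\epsilon = \phi$ on $\{\psi - \epsilon < \phi\}$, and the coincidence set $\{\psi - \epsilon = \phi\}$ contributes through the $\phi$-side as well. The hypothesis $\psi \leq \phi$ a.e.\ with respect to $MA(\phi)$ then yields $MA(\phi)(\{\psi > \phi + \epsilon\}) = 0$, so
\begin{equation*}
1 = \int_X MA(u_\epsilon) = MA(\psi)(\{\psi > \phi + \epsilon\}) + MA(\phi)(\{\psi \leq \phi + \epsilon\}) = MA(\psi)(\{\psi > \phi + \epsilon\}) + 1,
\end{equation*}
forcing $MA(\psi)(\{\psi > \phi + \epsilon\}) = 0$. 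Combining the pieces, I conclude $MA(u_\epsilon) = MA(\phi)$ as Borel measures on $X$.

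The final step is to invoke the uniqueness part of Dinew's Monge-Amp\`ere theorem in the class $\mathcal{E}(X,\omega)$: any two functions in $\mathcal{E}(X,\omega)$ with the same non-pluripolar Monge-Amp\`ere measure must differ by an additive constant, and the pointwise comparison $u_\epsilon \geq \phi$ pins that constant at zero. Hence $u_\epsilon = \phi$, i.e.\ $\psi \leq \phi + \epsilon$ everywhere on $X$; letting $\epsilon \to 0^+$ yields $\psi \leq \phi$ on all of $X$. The main obstacle is precisely this final uniqueness statement, which is the hard technical content of \cite{Dinew}: upgrading a.e.\ equality of Monge-Amp\`ere measures to pointwise equality of potentials in the unbounded class $\mathcal{E}(X,\omega)$ requires a delicate capacity-type comparison that goes well beyond the Bedford--Taylor theory for bounded $\omega$-psh functions; the plurifine-locality decomposition used in the second step also rests on this same technology.
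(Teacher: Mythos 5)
The paper offers no proof of Theorem \ref{domp} at all --- it is imported from \cite{Dinew} as a black box --- so there is no internal argument to compare yours against; judged on its own, your proof is the standard reduction of the domination principle to Dinew's uniqueness theorem and the overall strategy is sound. Two steps need tightening. First, plurifine locality only identifies $MA(u_\epsilon)$ on the plurifine \emph{open} sets $\{\psi-\epsilon>\phi\}$ and $\{\psi-\epsilon<\phi\}$; it says nothing on the coincidence set, so the equality decomposition you assert (with the coincidence set ``contributing through the $\phi$-side'') is not justified as stated. But you do not need it: the two restriction identities alone give
$$MA(u_\epsilon)\ \geq\ \mathbbm{1}_{\{\psi-\epsilon>\phi\}}MA(\psi)+\mathbbm{1}_{\{\psi-\epsilon<\phi\}}MA(\phi),$$
and since $\{\psi-\epsilon\geq\phi\}\subset\{\psi>\phi\}$ is $MA(\phi)$-null, the second term already has full mass $1$; comparing total masses then forces $MA(\psi)(\{\psi>\phi+\epsilon\})=0$ and, because $MA(u_\epsilon)\geq\mathbbm{1}_{\{\psi-\epsilon<\phi\}}MA(\phi)=MA(\phi)$ with equal total mass $1$, the desired identity $MA(u_\epsilon)=MA(\phi)$. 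Second, the pointwise bound $u_\epsilon\geq\phi$ pins the constant $c$ from Dinew's uniqueness only from one side, namely $c\geq 0$; to conclude $c=0$ you should add that $\{\psi\leq\phi\}$ has full $MA(\phi)$-measure, hence is nonempty, and $u_\epsilon=\phi$ there (equivalently: $c>0$ would force $\psi>\phi+\epsilon$ everywhere on $X$, contradicting the hypothesis). With these two repairs the argument is complete, and your reliance on the uniqueness theorem in $\mathcal{E}(X,\omega)$ is not circular, since that uniqueness statement --- not the domination principle itself --- is the hard content carried by the citation \cite{Dinew}; your other inputs (monotonicity of the full-mass class and plurifine locality of the non-pluripolar operator) are standard Guedj--Zeriahi-type facts.
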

It is well know that (see eg.\ \cite{DemBook}) there is a 1-1 correspondence between Lelong class psh function $\mathcal{L}(\Bbb{C}^m)$ and the set of $\omega$-psh functions which is given by the natural identification
\begin{equation}\label{LL}u\in \mathcal{L}(\C) \to \varphi(z):=
\begin{cases}
u(z)-\frac{1}{2}\log(1+\|z\|^2) & \text{for}\ z\in\C\\
\limsup_{w\in\C\to z}  u(w)-\frac{1}{2}\log(1+\|w\|^2) & \text{for}\ z\in H_{\infty}
\end{cases}  
 \end{equation}  where $\Bbb{P}^m=\Bbb{C}^m\cup H_{\infty}$ and $H_{\infty}$ denotes the hyperplane at infinity.

Now, writing $u\in \mathcal{L}(\Bbb{C}^m)$ as $u=\phi+u_0$ where $\phi\in PSH(X,\omega)$ and $u_0(z)=\frac12\log(1+\|z\|^2)$ on $\Bbb{C}^m$ we see that
$$NP(dd^cu)^m=MA(\phi)$$ on $\Bbb{C}^m$ where
\begin{equation}\label{npma}
NP(dd^cu)^m=\lim_{j\to \infty}\{\mathbbm{1}_{\{u>-j\}}(dd^c\max[u,-j])^m\}
\end{equation} denotes the non-pluripolar Monge-Amp\`ere (cf. \cite[\S 4]{BT87}). Hence, we obtain the following $\Bbb{C}^m$ version of Dinew's domination principle: 

\begin{cor}\label{dom}
Let $u,v\in \mathcal{L}(\Bbb{C}^m)$ and assume that 
$$\int_{\C}NP(dd^cu)^m=1.$$ If $v\leq u$ a.e with respect to $NP(dd^cu)^m$ then $v\leq u$ on $\Bbb{C}^m$.
\end{cor}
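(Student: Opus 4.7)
The plan is to lift the statement from $\mathbb{C}^m$ to $X=\mathbb{P}^m$ via the identification (\ref{LL}) and then invoke Dinew's domination principle (Theorem \ref{domp}). Write $u=\phi_u+u_0$ and $v=\phi_v+u_0$ on $\mathbb{C}^m$, where $u_0(z)=\tfrac12\log(1+\|z\|^2)$ and $\phi_u,\phi_v\in PSH(X,\omega)$ are the extensions furnished by (\ref{LL}). By the identity $NP(dd^cu)^m=MA(\phi_u)$ on $\mathbb{C}^m$ (as noted just before the corollary), the hypothesis $\int_{\mathbb{C}^m}NP(dd^cu)^m=1$ translates to $\int_{\mathbb{C}^m}MA(\phi_u)=1$.

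Next I would verify that $\phi_u\in\mathcal{E}(X,\omega)$. The total-mass estimate (\ref{min}) gives $\int_X MA(\phi_u)\leq 1$, while the previous step shows that already the open piece $\mathbb{C}^m\subset X$ carries full mass $1$. Combining these two facts forces $\int_X MA(\phi_u)=1$ and simultaneously $MA(\phi_u)(H_\infty)=0$. Hence $\phi_u$ lies in the class $\mathcal{E}(X,\omega)$ of full-mass $\omega$-psh functions required by Theorem \ref{domp}.

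Now I would transport the hypothesis $v\leq u$ a.e.\ $NP(dd^cu)^m$ into the projective picture. Since $u_0$ is finite and continuous on $\mathbb{C}^m$, $v\leq u$ is equivalent to $\phi_v\leq\phi_u$ on $\mathbb{C}^m$; combined with $NP(dd^cu)^m=MA(\phi_u)$ on $\mathbb{C}^m$ and the fact that $MA(\phi_u)$ puts no mass on $H_\infty$, we obtain $\phi_v\leq\phi_u$ a.e.\ with respect to $MA(\phi_u)$ on the whole of $X$. Theorem \ref{domp} then yields $\phi_v\leq\phi_u$ everywhere on $X$, and restricting back to $\mathbb{C}^m$ and adding $u_0$ gives $v\leq u$ on $\mathbb{C}^m$, as claimed.

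The one delicate point — and the step I would write out carefully — is the verification that the boundary divisor $H_\infty$ carries no $MA(\phi_u)$-mass, since without this the hypothesis on $\mathbb{C}^m$ would not suffice to feed Theorem \ref{domp}. Everything else is essentially bookkeeping against the dictionary (\ref{LL}) together with (\ref{npma}); no new pluripotential-theoretic input beyond Dinew's theorem is needed.
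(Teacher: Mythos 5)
Your proof is correct and follows essentially the same route the paper takes: lift $u,v$ to $\omega$-psh functions on $\Bbb{P}^m$ via (\ref{LL}), use $NP(dd^cu)^m=MA(\phi_u)$ on $\Bbb{C}^m$ together with (\ref{min}) to place $\phi_u$ in $\mathcal{E}(X,\omega)$, and conclude with Theorem \ref{domp}. Your explicit verification that $H_\infty$ carries no $MA(\phi_u)$-mass is exactly the point the paper leaves implicit, and it is handled correctly.
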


\subsection{Mass Asymptotics}\label{secMass} We continue with a basic result which asserts that mass equidistribution for sequences of weighted polynomials implies $L^1_{loc}$-convergence of potentials to the weighted global extremal function. 

\begin{thm}\label{basic} Let $Y\subset \C$ be a locally regular closed set, $\varphi:Y\to \Bbb{R}$ be a continuous weight function and $\nu$ be a BM-measure for the weighted set $(Y,\varphi)$. If $Y$ is unbounded, we also require $\varphi(z)$ to verify (\ref{gr}). We assume that 
\begin{equation}\label{npmass}
\int_{Int(S_{Y,\varphi})}d\mu_{Y,\varphi}=1.
\end{equation} 
Furthermore, let $p_n\in \pn$ be a sequence of polynomials verifying
\begin{equation}\label{an}
\limsup_{n\to \infty}\frac1n\log\|p_ne^{-n\varphi}\|_{L^2(\nu)}\leq 0
\end{equation}
and  assume that
\begin{equation}\label{a1}
\frac{1}{d_n}|p_n(z)|^2e^{-2n\varphi}d\nu\to \mu_{Y,\varphi}
\end{equation}
in the weak-star topology of measures on $S_{Y,\varphi}$. Then 
\begin{equation}
\frac1n\log|p_n|\to V_{Y,\varphi}\ \text{in}\ L^1_{loc}(\C).
\end{equation}
In particular,
$$\frac1n[Z_{p_n}]:=\frac1ndd^c\log|p_n|\to dd^c(V_{Y,\varphi})$$ in the sense of currents.
\end{thm}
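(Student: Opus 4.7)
The plan is to set $u_n := \tfrac{1}{n}\log|p_n|$ and prove $u_n \to V_{Y,\varphi}$ in $L^1_{\mathrm{loc}}(\C)$; the current statement then follows from the continuity of $dd^c$ on $L^1_{\mathrm{loc}}$-convergent psh sequences. First, I would establish a uniform upper bound: the Bernstein-Markov inequality (\ref{bm}) together with the $L^2$ hypothesis (\ref{an}) yields $u_n \leq \varphi + \eta_n$ on $Y_r$ with $\limsup\eta_n \leq 0$, and inserting the normalized polynomial $p_n/\|p_n e^{-n\varphi}\|_{Y_r}$ into the Siciak--Zaharyuta envelope formula (\ref{envelope}) upgrades this to the global bound $u_n \leq V_{Y,\varphi} + \eta_n$ on $\C$. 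Together with the growth condition (\ref{gr}), this makes $(u_n)$ locally uniformly bounded above, so by standard compactness of the Lelong class each subsequence has a further $L^1_{\mathrm{loc}}$-convergent subsequence whose limit $u \in \mathcal{L}(\C)$ satisfies $u \leq V_{Y,\varphi}$. The alternative $u_n \to -\infty$ uniformly on compacts is excluded by (\ref{a1}), which upon taking total mass on $S_{Y,\varphi}$ forces $\tfrac{1}{n}\log\|p_n e^{-n\varphi}\|_{L^2(\nu)} \to 0$.

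Next, I would pin down any such limit $u$ throughout $\mathrm{Int}(S_{Y,\varphi})$. Fix $z_0 \in \mathrm{Int}(S_{Y,\varphi})$ and a small ball $B = B(z_0, r)$ contained in this open set. Since $B$ meets $\mathrm{supp}(\mu_{Y,\varphi}) = S_{Y,\varphi}$, (\ref{a1}) gives $\liminf_n \tfrac{1}{d_n}\int_B |p_n|^2 e^{-2n\varphi}\,d\nu \geq \mu_{Y,\varphi}(B) > 0$. Bounding the integrand by its supremum and using $\tfrac{1}{n}\log d_n \to 0$ yields $\liminf_n \sup_B(u_n - \varphi) \geq 0$, which combined with the envelope bound of the first step forces $\sup_B(u_n - \varphi) \to 0$. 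Choosing $z_n^{\ast} \in \overline{B}$ with $u_n(z_n^{\ast}) - \varphi(z_n^{\ast}) \to 0$ and passing to a subsequence with $z_n^{\ast} \to z_{\ast}$, Hartogs' lemma for the $L^1_{\mathrm{loc}}$-convergent subsequence combined with upper semi-continuity of $u$ and continuity of $\varphi$ delivers $u(z_{\ast}) \geq \varphi(z_{\ast})$; shrinking $r \to 0$ and invoking upper semi-continuity of $u$ once more yields $u(z_0) \geq \varphi(z_0) = V_{Y,\varphi}(z_0)$. Combined with $u \leq V_{Y,\varphi}$, this shows $u = V_{Y,\varphi}$ throughout $\mathrm{Int}(S_{Y,\varphi})$.

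The main obstacle -- and precisely the gap in \cite{Zel} that the theorem is designed to close -- is propagating this equality off $\mathrm{Int}(S_{Y,\varphi})$. On the open set $\mathrm{Int}(S_{Y,\varphi})$ both $u$ and $V_{Y,\varphi}$ are bounded (the latter by local regularity of $Y$ plus continuity of $\varphi$) and coincide, so by the locality of the Bedford--Taylor Monge-Amp\`ere operator on open sets, $NP(dd^c u)^m = \mu_{Y,\varphi}$ on $\mathrm{Int}(S_{Y,\varphi})$. Hypothesis (\ref{npmass}) then forces $\int_{\C} NP(dd^c u)^m \geq 1$, and (\ref{min}) via the identification (\ref{LL}) promotes this to equality, so $u$ corresponds to an element of $\mathcal{E}(X,\omega)$ with $NP(dd^c u)^m = \mu_{Y,\varphi}$ as probability measures on $\C$. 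Since $V_{Y,\varphi} \leq u$ with equality on $\mathrm{Int}(S_{Y,\varphi})$, which carries the full mass of $NP(dd^c u)^m$, Corollary \ref{dom} applied with $v := V_{Y,\varphi}$ extends this inequality to all of $\C$. Combined with the envelope upper bound this yields $u = V_{Y,\varphi}$; since every $L^1_{\mathrm{loc}}$-subsequential limit equals $V_{Y,\varphi}$, the full sequence $u_n$ converges to $V_{Y,\varphi}$ in $L^1_{\mathrm{loc}}$, completing the proof.
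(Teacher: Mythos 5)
Your proposal is correct and follows essentially the same route as the paper: a Bernstein--Markov/envelope upper bound $\frac1n\log|p_n|\le V_{Y,\varphi}+o(1)$ on $\C$, identification of the limit with $V_{Y,\varphi}$ on $Int(S_{Y,\varphi})$ via the mass hypothesis (\ref{a1}) and Hartogs' lemma, and then locality of the Monge--Amp\`ere operator together with (\ref{npmass}), (\ref{min}) and the domination principle (Corollary \ref{dom}) to propagate the equality to all of $\C$. The differences are only presentational: the paper works with the regularized $\limsup$ along subsequences and proves the interior equality by contradiction, then concludes $L^1_{loc}$-convergence by citing \cite[Proposition 4.4]{BloomL}, whereas you use $L^1_{loc}$ subsequential limits, a direct near-maximizer argument, and a sub-subsequence argument.
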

 The hypothesis (\ref{a1}) means that for each continuous function $u\in C(S_{Y,\varphi})$ we have
 $$\frac{1}{d_n}\int_{S_{Y,\varphi}} u(z) |p_n(z)|^2e^{-2n\varphi(z)}d\nu\to \int_{S_{Y,\varphi}}  u(z) d\mu_{Y,\varphi}\ \text{as}\ n\to\infty.$$ We remark that the normalization factor $\frac{1}{d_n}$ is non-standard (cf. \cite{SZ,Zel}). However, the current version is more suitable for our purposes (cf. Theorem \ref{main}). In complex dimension one, $[Z_{p_n}]=\sum_{p_n(z)=0}\delta_z$ becomes counting measure on zeros of $p_n$. Hence, Theorem \ref{basic} gives a sufficient condition for zeros of weighted polynomials to be equidistributed with respect to the associated equilibrium measure. We also remark that assumption (\ref{npmass}) requires, in particular, that $Int(S_{Y,\varphi})$ is a non-empty open subset of $\C$. This is necessary as the following example shows: 
 
 \begin{example}In the spacial case $Y=S^1$ unit circle and $\varphi\equiv 0$ we have $V_{Y}(z)=\log^+|z|$ and $\mu_{Y,\varphi}=\frac{1}{2\pi}d\theta$ is the normalized arc-length measure. In this case, the monomials $p_n(z)=z^n$ and $\nu=\frac{1}{2\pi}d\theta$ satisfy the hypotheses of the Theorem \ref{basic} but $\frac1n\log|p_n(z)|=\log|z|\not=\log^+|z|$ in $L_{loc}^1(\Bbb{C})$.  
 \end{example}
 We thank Tom Bloom for pointing this example out. We are also grateful to N. Levenberg for his comments on an earlier draft. 
\begin{proof}[Proof of Theorem \ref{basic}]
We fix $r\gg1$ such that $V_{Y,\varphi}=V_{Y_r,\varphi}$ which implies that $S_{Y,\varphi}\subset Y_r$. Then by (\ref{an}) and BM inequality (\ref{bm}) for each $\epsilon>0$
$$\|p_ne^{-n\varphi}\|_{S_{Y,\varphi}}\leq \|p_ne^{-n\varphi}\|_{Y_r}\leq e^{\epsilon n}M_n$$ for sufficiently large $n$. It follows from Theorem 2.5 of \cite[Appendix B]{SaffTotik}, continuity of $\varphi$ and $V_{Y,\varphi}$ that $V_{Y,\varphi}=\varphi$ on $S_{Y,\varphi}$. This implies that
$$|p_n(z)|\leq  M_n e^{n(V_{Y,\varphi}(z)+\epsilon)}\ \forall z\in S_{Y,\varphi}.$$ 
Applying \cite[Theorem 2.6 in Appendix B]{SaffTotik} we deduce that 
\begin{equation}\label{beq1}
|p_n(z)|\leq  M_n e^{n(V_{Y,\varphi}(z)+\epsilon)}\ \forall z\in\C
\end{equation} for sufficiently large $n$.
Since $\epsilon>0$ arbitrary, by \cite[Theorem 5.2.1]{Klimek} we conclude that for every sequence of positive integers $J$ the function
\begin{equation}\label{c1}
G(z):=(\limsup_{n\in J}\frac1n\log|p_n(z)|)^*\in \mathcal{L}(\C)
\end{equation} and satisfies
\begin{equation}\label{c2}
G\leq V_{Y,\varphi}\ \text{on}\ \C.
\end{equation} Next, we claim that  
\begin{equation}\label{c3}
G(z)=V_{Y,\varphi}(z)\ \text{on}\ Int(S_{Y,\varphi}).
\end{equation}
Postponing the proof of the claim for the moment and assuming (\ref{c3}), since $V_{Y,\varphi}$ is locally bounded on $\C$ and $Int(S_{Y,\varphi})$ is an open subset of $\C$, by (\ref{npmass}) and (\ref{min}) we deduce that
\begin{equation}
\int_{\C}NP(dd^cG)^m=\int_{Int(S_{Y,\varphi})}NP(dd^cG)^m=\int_{Int(S_{Y,\varphi})}d\mu_{Y,\varphi}=1.
\end{equation} Here, $NP(dd^cG)^m$ denotes the non-pluripolar Monge-Amp\`ere of $G$. This implies that
\begin{equation}
G(z)=V_{Y,\varphi}(z)\ a.e.\ \text{with respect to}\ NP(dd^cG)^m.
\end{equation}  Thus, we can apply domination principle Corollary \ref{dom} with $u=G$ and $v=V_{Y,\varphi}$ to conclude that $$G=V_{Y,\varphi}\ \text{on}\ \C.$$ Hence, the theorem follows from \cite[Proposition 4.4]{BloomL}.

Now, we return the proof of the claim (\ref{c3}). To this end, assume that $G(w)<V_{Y,\varphi}(w)$ for some $w\in Int(S_{Y,\varphi}).$ We fix an open ball $w\in B\subset Int(S_{Y,\varphi})$. Note that by (\ref{beq1}) and \cite[Theorem 3.2.12]{Hormander} there are two options:
\begin{itemize}
\item[(i)] $\frac1n\log|p_n|\to -\infty$ locally uniformly on $B$ 
\item[(ii)] there exists a further subsequence $J_1$ such that for $n\in J_1$ $$\frac1n\log|p_n|\to g\ \text{in}\ L^1(B).$$ 
\end{itemize}
First, we rule out the option (i). Indeed, otherwise 
$$|p_n|e^{-n\varphi}\ll 1\ \text{locally uniformly on}\ B\ \text{for}\ n\gg1$$ which contradicts (\ref{a1}). Thus, (ii) occurs. Then passing to a further subsequence $J_2\subset J_1$ we conclude that 
$$\frac1n\log|p_n|\to g\ a.e.\ \text{on}\ B.$$ Note that $g^*$ is psh on $B$ and $g^*=g$ a.e on $B$ hence $g^*\leq G$ on $B$ which in turn implies that $g^*(w)<V_{Y,\varphi}(w)$. Then by Hartogs' lemma and continuity of $V_{Y,\varphi}$ there exists $\delta,\rho>0$ such that $B(w,\rho)\subset B$ and 
$$\frac1n\log|p_n(z)|<V_{Y,\varphi}(z)-\delta,\ \forall z\in B(w,\rho)$$ for large $n\in J_2.$ Since $V_{Y,\varphi}\leq \varphi$ on $Y$ we infer that
$$|p_n(z)|e^{-n\varphi(z)}\leq e^{-n\delta}\ \ \forall z\in B(w,\rho)$$ for large $n\in J_2.$ This contradicts (\ref{a1}). Hence, we conclude that
$$g^*(w)=V_{Y,\varphi}(w)\leq G(w)\ \text{for}\ w\in Int(S_{Y,\varphi}).$$
This finishes the proof.

\end{proof}

\section{Mass Asymptotics of Random Polynomials}\label{S3}
In the rest of this paper we consider the special case where $Y=\C$ and $\varphi:\C\to \Bbb{R}$ is a $\mathscr{C}^{1,1}$ function. We also assume that $\varphi$ verifies (\ref{growth}). We denote the corresponding global extremal function
\begin{equation}\label{ex}
\varphi_e(z):=V_{\C,\varphi}(z)=\sup\{\psi(z):\psi\in \mathcal{L}(\C), \psi\leq \varphi\ \text{on}\  \C\}
\end{equation}
and the support $S_{\varphi}:=S_{\C,\varphi}$ of the Monge-Amp\`ere $\mu_{\varphi_e}:=\frac{1}{m!}(dd^cV_{\C,\varphi})^m$. 

 In \cite[Corollary 3.6]{Berman} Berman proved that  
  
\begin{equation}
S_{\varphi}:=\{z\in\C: \varphi(z)=\varphi_e(z)\ \text{and}\ dd^c\varphi(z)>0 \}.
\end{equation}
and 
\begin{equation}\label{eqm}
\mu_{\varphi_e}=\mathbbm{1}_{S_{\varphi}}\det(dd^c\varphi)dV_m.
\end{equation} 
We remark that by $\mathscr{C}^{1,1}$ regularity $dd^c\varphi(z)=\frac{i}{\pi}\sum_{j,k} \frac{\partial^2\varphi}{\partial z_j\partial\overline{z}_k}dz_j\wedge d\overline{z}_k$  is well-defined at Lebesgue almost every $z\in\C$ and the condition $dd^c\varphi(z)>0$ implies all eigenvalues of the Hessian $\begin{bmatrix} \frac{\partial^2\varphi}{\partial z_j\partial\overline{z}_k}\end{bmatrix}_{j,k}$ are positive. Moreover, $\det(dd^c\varphi):=(\frac{2}{\pi})^m\det\begin{bmatrix} \frac{\partial^2\varphi}{\partial z_j\partial\overline{z}_k}\end{bmatrix}$.

\subsection{Bergman Kernel Asymptotics}
 For a fixed orthonormal basis (ONB) $\{P^n_j\}_{j=1}^{d_n}$ for $\pn$ with respect to the norm (\ref{in}) the Bergman kernel is given by
$$K_n(z,w):=\sum_{j=1}^{d_n}P_j^n(z)\overline{P_j^n(w)}.$$ 
We also denote the \textit{Bergman function} by
$$B_n(z):=K_n(z,z)e^{-2n\varphi(z)}=\sum_{j=1}^{d_n}|P^n_j(z)|^2e^{-2n\varphi(z)}.$$
Bergman function $B_n$ has the extremal property
\begin{equation}\label{pick}
B_n(z)=\sup_{f_n\in \pn\setminus \{0\}}\frac{|f_n(z)|^2e^{-2n\varphi(z)}}{\|f_n\|_{n}^2}.
\end{equation} where $\|f_n\|_{n}$ denotes the norm induced by (\ref{in}). 
Moreover, we have the following dimensional density property
$$\int_{\C}B_n(z)dV_m(z)=\dim(\pn)=O(n^m).$$

The following result will be useful in order to obtain expected mass distribution of random polynomials (see Proposition \ref{exp}). 

\begin{thm}\cite{Berman1}\label{Ber2}
Let $\varphi:\C\to \Bbb{R}$ be a $\mathscr{C}^{1,1}$-weight function satisfying (\ref{growth}). Then
$$n^{-m}K_n(z,z)e^{-2n\varphi(z)} \to \mathbbm{1}_{S_{\varphi}}\det(dd^c\varphi)$$ in $L^1(\C)$. In particular, $n^{-m}K_n(z,z)e^{-2n\varphi(z)}dV_m(z)$ converges to the weighted equilibrium measure $\mu_{\varphi_e}$ in the weak-star topology on $\C$.
\end{thm}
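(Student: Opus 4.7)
The plan is to separate the analysis into the region away from the contact set $S_\varphi$, where $B_n := K_n(\cdot,\cdot)e^{-2n\varphi}$ should decay to zero exponentially, and the interior of $S_\varphi$, where $B_n/n^m$ should converge pointwise to $\det(dd^c\varphi)$. Then I would upgrade this to $L^1(\C)$ convergence by matching total masses.

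For the upper bound and off-support decay, I would exploit the extremal characterization $B_n(z)=\sup\{|f_n(z)|^2 e^{-2n\varphi(z)}/\|f_n\|_n^2 : f_n\in\pn\setminus\{0\}\}$. Since $\|f_ne^{-n\varphi}\|_{L^2}\leq \|f_n\|_n$ (built into the definition), a Bernstein-Markov type inequality combined with the Siciak-Zaharyuta envelope formula (\ref{envelope}) yields $\tfrac1n\log B_n(z)\leq 2(\varphi_e(z)-\varphi(z)) + o(1)$ locally uniformly. On $\C\setminus S_\varphi$ we have $\varphi_e<\varphi$ strictly, so $B_n(z)\to 0$ exponentially and certainly $n^{-m}B_n\to 0$ locally uniformly there. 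The growth hypothesis (\ref{growth}) together with an elementary cutoff argument outside a large ball guarantees that contributions from $\|z\|\gg 1$ are negligible in $L^1$.

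For the asymptotics on $\mathrm{Int}(S_\varphi)$, where $\varphi=\varphi_e$ and $dd^c\varphi>0$ a.e.\ by $\mathscr{C}^{1,1}$-regularity, I would proceed by a local Bargmann-type model. Fix a Lebesgue point $z_0\in \mathrm{Int}(S_\varphi)$ where the Hessian exists and is positive definite; after a unitary change of coordinates centered at $z_0$, rescale by $w=\sqrt{n}(z-z_0)$. The weight $2n\varphi$ converges in $\mathscr{C}^{1,1}_{\mathrm{loc}}$ after subtracting linear terms to the model quadratic weight $Q(w)=\langle H w,w\rangle$ with $H=[\partial^2\varphi/\partial z_j\partial\bar z_k](z_0)$. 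The extremal property of $B_n$ and H\"ormander's $L^2$ estimates for $\dbar$ with the strictly psh weight let one construct a local concentrated peak section at $z_0$ of the right mass, and compare the rescaled Bergman kernel with the explicit Bargmann kernel for $Q$ on $\C$, which has value $\det(dd^c\varphi)(z_0)$ at the origin. This gives $n^{-m}B_n(z_0)\to \det(dd^c\varphi)(z_0)$ at a.e.\ $z_0\in\mathrm{Int}(S_\varphi)$.

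The final step, and the main obstacle, is upgrading pointwise a.e.\ convergence plus uniform decay off $S_\varphi$ to convergence in $L^1(\C)$. Here I would use that total masses match: $\int_\C n^{-m}B_n\, dV_m = n^{-m}d_n \to 1/m!$, and $\int \mathbbm{1}_{S_\varphi}\det(dd^c\varphi)\,dV_m = \int \mu_{\varphi_e} = 1/m!$ by normalization of the Monge-Amp\`ere of $\varphi_e$. Combined with a Fatou / Brezis-Lieb argument, pointwise a.e.\ convergence plus equality of total mass forces $L^1$ convergence. The delicate point is that the contact set $S_\varphi$ may have complicated topology; one must show $\partial S_\varphi$ is $\mu_{\varphi_e}$-negligible (free-boundary regularity of the obstacle problem for $\varphi_e\leq\varphi$) and that no mass escapes to infinity, for which the super-logarithmic growth (\ref{growth}) is exactly what is needed. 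Given the $L^1$ convergence, the weak-$*$ convergence of $n^{-m}K_n(z,z)e^{-2n\varphi}dV_m$ to $\mu_{\varphi_e}$ on $\C$ follows immediately by pairing against bounded continuous functions.
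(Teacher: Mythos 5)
This theorem is not proved in the paper at all: it is quoted verbatim from Berman \cite{Berman1}, so your sketch has to stand on its own, and as written it has a genuine gap. The decomposition ``exponential decay off $S_\varphi$ plus bulk asymptotics on $\mathrm{Int}(S_\varphi)$'' does not cover $\C$. The inequality $\frac1n\log B_n\le 2(\varphi_e-\varphi)+o(1)$ is fine, but your claim that $\varphi_e<\varphi$ strictly on the complement of $S_\varphi$ is false: $S_\varphi$ is only the part of the contact set $D:=\{\varphi=\varphi_e\}$ where in addition $dd^c\varphi>0$, and $D\setminus S_\varphi$ can have positive Lebesgue measure (already in dimension one, take a radial $\mathscr{C}^{1,1}$ subharmonic weight which is harmonic on an annulus: then $D$ is everything while the annulus lies outside $S_\varphi$). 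On that degenerate contact region you get no exponential decay, and your rescaled Bargmann model does not apply either, since the Hessian there is not positive definite. This wrecks the last step: Scheff\'e/Brezis--Lieb type arguments need a.e.\ convergence (or at least an a.e.\ upper bound plus domination) on all of $\C$, whereas you control $n^{-m}B_n$ only on $\mathrm{Int}(S_\varphi)$ and on $\C\setminus D$; matching total masses cannot rule out that a fixed fraction of the mass survives on $D\setminus S_\varphi$ or on $\partial S_\varphi$. Relatedly, the free-boundary regularity you invoke, $\mu_{\varphi_e}(\partial S_\varphi)=0$, is an unproved extra hypothesis for general $\mathscr{C}^{1,1}$ weights (note the paper has to \emph{assume} the analogous condition (\ref{npmass}) in Theorem \ref{basic}), and your peak-section lower bound at a.e.\ point of $\mathrm{Int}(S_\varphi)$ is itself delicate here because the Hessian of a $\mathscr{C}^{1,1}$ weight exists only a.e., so strict plurisubharmonicity on a neighborhood is not available.

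Berman's argument avoids all of this and is softer than your plan. The key step is a pointwise a.e.\ upper bound valid on all of $\C$, in essence $\limsup_n n^{-m}K_n(z,z)e^{-2n\varphi(z)}\le \mathbbm{1}_{S_\varphi}(z)\det(dd^c\varphi)(z)$ for a.e.\ $z$, obtained from the extremal property (\ref{pick}) and a sub-mean-value estimate over balls of radius comparable to $\log n/\sqrt n$, using only the second-order Taylor expansion of the $\mathscr{C}^{1,1}$ weight (no strict positivity needed, so the degenerate contact points are covered and contribute zero), combined with the exponential decay off $D$ coming from $\frac{1}{2n}\log K_n(z,z)\le\varphi_e(z)+o(1)$ and with an integrable dominating tail supplied by (\ref{growth}). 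Reverse Fatou then gives $\limsup_n\int_{\C} n^{-m}B_n\,dV_m\le\int_{S_\varphi}\det(dd^c\varphi)\,dV_m=1/m!$, while the dimension count gives $\int_{\C} n^{-m}B_n\,dV_m=n^{-m}d_n\to 1/m!$; the dominated a.e.\ upper bound plus convergence of the integrals forces $\int(n^{-m}B_n-f)^+\to0$ and hence $L^1$ convergence to $f=\mathbbm{1}_{S_\varphi}\det(dd^c\varphi)$. In particular no H\"ormander/peak-section lower bound and no regularity of $\partial S_\varphi$ are needed; if you want to salvage your outline, replace the ``strict inequality off $S_\varphi$'' step by such an everywhere-valid a.e.\ upper bound with a dominating function, after which the mass-matching conclusion does go through.
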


The next result is also due to Berman \cite[Theorem 3.8]{Berman1} which allows us to get asymptotic Hilbert-Schmidt norms of the Toeplitz operators (see Proposition \ref{prop2}):

 \begin{thm}\label{Ber4}
 Let $\varphi:\C\to \Bbb{R}$ be a $\mathscr{C}^{1,1}$-weight function satisfying (\ref{growth}). Then
 $$n^{-m}|K_n(z,w)|^2e^{-2n\varphi(z)-2n\varphi(w)}dV_m(z)dV_m(w)\to \Delta\wedge \mathbbm{1}_{S_{\varphi}}\mu_{\varphi_e} $$ as measures on $\C\times\C$ in weak-star topology. 
  \end{thm}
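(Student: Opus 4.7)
The plan is to verify the weak-star convergence by testing against an arbitrary $\chi\in C_c(\C\times\C)$. Recalling that $\Delta\wedge\mathbbm{1}_{S_{\varphi}}\mu_{\varphi_e}$ is the measure on $\C\times\C$ supported on the diagonal whose pairing with $\chi$ equals $\int_{S_{\varphi}}\chi(z,z)\,d\mu_{\varphi_e}(z)$, set
$$I_n(\chi):=n^{-m}\int_{\C}\int_{\C}\chi(z,w)|K_n(z,w)|^2 e^{-2n\varphi(z)-2n\varphi(w)}dV_m(w)dV_m(z).$$
The natural decomposition is $\chi(z,w)=\chi(z,z)+[\chi(z,w)-\chi(z,z)]$, which splits $I_n=I_n^{\mathrm{diag}}+R_n$. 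The goal is to identify $I_n^{\mathrm{diag}}$ with the diagonal asymptotic of Theorem \ref{Ber2} and show $R_n\to 0$.

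For the diagonal piece, the reproducing property of $K_n$ applied to $f=K_n(\cdot,z)\in\pn$, together with the Hermitian symmetry $K_n(w,z)=\overline{K_n(z,w)}$, yields
$$\int_{\C}|K_n(z,w)|^2 e^{-2n\varphi(w)}dV_m(w)=K_n(z,z).$$
Substituting this gives $I_n^{\mathrm{diag}}=\int\chi(z,z)\,n^{-m}B_n(z)\,dV_m(z)$, which converges to $\int\chi(z,z)\,d\mu_{\varphi_e}(z)$ directly by Theorem \ref{Ber2}. This delivers the target limit.

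To dispose of the remainder $R_n$, I would introduce a near/far-diagonal partition with parameter $\delta>0$. On $\{|z-w|\le\delta\}$, uniform continuity of $\chi$ gives $|\chi(z,w)-\chi(z,z)|\le\omega_\chi(\delta)$, and the same reproducing identity yields
$$|R_n^{\mathrm{near}}|\le \omega_\chi(\delta)\int n^{-m}B_n(z)\,dV_m(z)\le C\omega_\chi(\delta),$$
uniformly in $n$, since $\int n^{-m}B_n dV_m=n^{-m}d_n=O(1)$. On $\{|z-w|>\delta\}$ intersected with $\mathrm{supp}(\chi)$, one invokes off-diagonal decay of the weighted Bergman kernel: for any compact $K\subset\C$ and any fixed $\delta>0$,
$$\sup_{(z,w)\in K\times K,\,|z-w|>\delta} n^{-m}|K_n(z,w)|^2 e^{-2n\varphi(z)-2n\varphi(w)}\longrightarrow 0$$
as $n\to\infty$ (typically at an exponential rate). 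Combined with boundedness of $\chi$ and compactness of $\mathrm{supp}(\chi)$ this forces $R_n^{\mathrm{far}}\to 0$. Letting $\delta\to 0$ concludes $R_n\to 0$.

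The principal technical obstacle is precisely the off-diagonal decay estimate in the low-regularity $\mathscr{C}^{1,1}$ setting. This is the ingredient that must be borrowed from Berman's analysis in \cite{Berman1}; the usual route to it is a localized H\"ormander-type $\dbar$-estimate at the Bargmann--Fock scale $1/\sqrt{n}$, exploiting the strict positivity of $dd^c\varphi$ in the interior of the bulk $S_{\varphi}$ together with a cutoff/extremal-function comparison away from it. Once this decay is in hand, the entire result reduces, via the decomposition and reproducing-kernel identity above, to the on-diagonal asymptotic Theorem \ref{Ber2}.
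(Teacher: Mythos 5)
The paper does not prove Theorem \ref{Ber4} at all: it is quoted directly from Berman \cite[Theorem 3.8]{Berman1}, so your proposal has to be judged as a self-contained argument (or as a reduction to precisely quotable results). The soft parts are correct: the reproducing identity $\int_{\C}|K_n(z,w)|^2e^{-2n\varphi(w)}dV_m(w)=K_n(z,z)$ does reduce the diagonal term to Theorem \ref{Ber2}, and the near-diagonal remainder is indeed $O(\omega_\chi(\delta))$ since $n^{-m}d_n=O(1)$. But there are two genuine gaps. First, the far-diagonal step as written does not close: the remainder integrand $[\chi(z,w)-\chi(z,z)]$ is \emph{not} supported in $\mathrm{supp}(\chi)$ — the term $\chi(z,z)$ is nonzero for $z$ in a compact set and \emph{all} $w\in\C$ — so a sup bound over $K\times K$ together with ``compactness of $\mathrm{supp}(\chi)$'' leaves the contribution from large $\|w\|$ untouched. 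This part is repairable (for $\|w\|$ large one can use the growth condition (\ref{growth}) together with the global estimate $|p(w)|e^{-n\varphi(w)}\leq M_n\|p\|_n e^{n(\varphi_e(w)-\varphi(w))}$ and $\varphi-\varphi_e\geq\epsilon\log\|w\|-C$ to kill the tail), but the estimate is missing from your argument.

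Second, and more seriously, the entire analytic content of the theorem is deferred to the claimed uniform off-diagonal decay $\sup_{z,w\in K,\,|z-w|>\delta}n^{-m}|K_n(z,w)|^2e^{-2n\varphi(z)-2n\varphi(w)}\to 0$, which you do not prove and merely attribute to \cite{Berman1}. In the $\mathscr{C}^{1,1}$ setting this is not an off-the-shelf fact: the trivial Cauchy--Schwarz bound only gives $n^{-m}|K_n(z,w)|^2e^{-2n\varphi(z)-2n\varphi(w)}\leq n^m\,[n^{-m}B_n(z)][n^{-m}B_n(w)]=O(n^m)$, and Berman's own proof of the quoted theorem does not establish (nor use) such a uniform sup estimate on compacts including boundary and degenerate points of $S_{\varphi}$; it works instead with the marginal identity (the $z$-marginal of the measures is $n^{-m}B_n\,dV_m$), Theorem \ref{Ber2}, and an \emph{integrated} near-diagonal mass concentration at Lebesgue-a.e.\ point of the bulk obtained from local Bargmann--Fock model comparison. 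So your reduction rests on an unproved statement that is possibly stronger than what the cited analysis provides; as it stands the proposal is a plausible outline, not a proof, and the missing off-diagonal ingredient is exactly where the theorem lives.
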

Here; $\Delta:=[\{z=w\}]$ denotes the current of integration along the diagonal in $\C\times\C$ and for any bounded continuous function $\Psi$ we have 
 $$\int_{\C\times \C} \Psi(x,y) \Delta\wedge \mathbbm{1}_{S_{\varphi}}\mu_{\varphi_e}:= \int_{S_{\varphi}} \Psi(x,x) d\mu_{\varphi_e}.$$ 

\subsection{Toeplitz operators and limiting distribution of eigenvalues}
We denote the orthogonal projection $$\Pi_n:L^2(\Bbb{C}^m, e^{-2n\varphi(z)}dV_m)\to \pn$$ onto the finite dimensional subspace $\pn$. For a bounded function $g:\Bbb{C}^m\to\Bbb{R}$ we also let $$M_g: L^2(\Bbb{C}^m, e^{-2n\varphi(z)}dV_m)\to L^2(\Bbb{C}^m, e^{-2n\varphi(z)}dV_m)$$ denote \textit{multiplication operator} defined by $$M_g(h)(z)=g(z)h(z).$$ We consider the sesquilinear form on $\pn$ defined by
$$\langle p,q\rangle_g:=\int_{\Bbb{C}^m}g(z)p(z)\overline{q(z)}e^{-2n\varphi(z)}dV_m.$$ Then by linear algebra there is a self-adjoint operator $T_n^g:\pn\to \pn$ such that
$$\langle p,q\rangle_g=\langle T_n^gp,q\rangle_n.$$
Note that $T_n^gp$ is nothing but the composition of orthogonal projection with the multiplication operator on $\pn$ i.e.\
$$T_n^g=\Pi_n\circ M_g$$ which is called $n^{th}$ \textit{Toeplitz operator} with multiplier $g$. The latter property implies that
$$T_n^gp(z)=\int_{\Bbb{C}^m}g(w)p(w)K_n(z,w)e^{-2n\varphi(w)}dV_m(w).$$ 
The following is a standard result in this setting and it indicates a connection between the Toeplitz operators and mass equidistribution:
\begin{prop}\label{prop2} Let $\varphi:\C\to \Bbb{R}$ be a $\mathscr{C}^{1,1}$-weight function satisfying (\ref{growth}) and $g:\C\to\Bbb{R}$ be a bounded function. Then
\begin{enumerate}

\item $Tr(T_n^g)=\int_{\Bbb{C}^m}g(z)K_n(z,z)e^{-2n\varphi(z)}dV_m.$

\item For each $k\in \Bbb{N}$ we have $$\frac{1}{d_n}Tr((T_n^g)^k)\to \int_{\Bbb{C}^m}g^k(z)d\mu_{\varphi_e}$$ as $n\to\infty.$
\end{enumerate} 
\end{prop}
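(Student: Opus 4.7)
For part (1), I compute the trace in the ONB $\{P_j^n\}$ of $\pn$: the defining relation $\langle T_n^g p,q\rangle_n = \langle p,q\rangle_g$ gives $\langle T_n^g P_j^n, P_j^n\rangle_n = \int g(z)|P_j^n(z)|^2 e^{-2n\varphi(z)} dV_m$, and summing over $j$ via $K_n(z,z) = \sum_j |P_j^n(z)|^2$ yields (1) at once. The case $k=1$ of part (2) then follows by substituting (1) and invoking Theorem \ref{Ber2}: the measures $d_n^{-1}K_n(z,z)e^{-2n\varphi(z)}dV_m$ converge weak-star to $\mu_{\varphi_e}$ on $\C$ (with $g$ bounded and the growth hypothesis \eqref{growth} controlling mass at infinity), so that $d_n^{-1}Tr(T_n^g)\to \int g\,d\mu_{\varphi_e}$.

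For $k\geq 2$, the first step is to derive a cyclic kernel representation of $Tr((T_n^g)^k)$. Starting from $T_n^g p(z) = \int g(w) K_n(z,w) p(w) e^{-2n\varphi(w)} dV_m(w)$, iterating $k$ times, and using the reproducing identity $\int K_n(z,w) K_n(w',z) e^{-2n\varphi(z)} dV_m(z) = K_n(w',w)$ to collapse one variable upon computing the trace, I obtain
\begin{equation*}
Tr((T_n^g)^k) = \int_{(\C)^k} \prod_{i=1}^k g(w_i)\, K_n(w_i, w_{i+1})\, e^{-2n\varphi(w_i)} \prod_{i=1}^k dV_m(w_i),
\end{equation*}
with cyclic convention $w_{k+1}:=w_1$. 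For $k=2$, the kernel product reduces to $|K_n(w_1,w_2)|^2$, and Theorem \ref{Ber4} applied to the bounded continuous weight $g(w_1)g(w_2)$ yields $d_n^{-1}Tr((T_n^g)^2)\to \int g^2\,d\mu_{\varphi_e}$ directly.

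The main obstacle is $k\geq 3$, where Theorem \ref{Ber4} does not directly control the cyclic product $\prod K_n(w_i,w_{i+1})$. Two routes suggest themselves: (i) extract from Berman's near-diagonal expansion (implicit in the proof of Theorem \ref{Ber4}) a Gaussian off-diagonal decay of $K_n(z,w)e^{-n\varphi(z)-n\varphi(w)}$ on $\mathrm{Int}(S_\varphi)$, then use it inductively to show that the $k$-fold integrand concentrates on the full diagonal $w_1=\cdots=w_k$ with limit $\int g^k\,d\mu_{\varphi_e}$; or (ii) establish an approximate multiplicativity $d_n^{-1}\|(T_n^g)^k - T_n^{g^k}\|_{HS}^2 \to 0$ by iterating a two-factor estimate, and then apply the Schwarz bound $|Tr(A)|\leq \sqrt{d_n}\,\|A\|_{HS}$ to reduce to the $k=1$ case applied to the bounded function $g^k$. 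Either route hinges on upgrading the qualitative Bergman kernel asymptotics of this section to a quantitative off-diagonal localization, which is the main technical ingredient of the argument.
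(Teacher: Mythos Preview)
Your argument for part (1) and for the cases $k=1$ and $k=2$ of part (2) is exactly the paper's: the trace formula comes from summing diagonal matrix entries in an ONB (the paper happens to use an eigenbasis of the self-adjoint operator $T_n^g$, but this is immaterial), and then Theorem \ref{Ber2} handles $k=1$ while Theorem \ref{Ber4} handles $k=2$ via the identity $Tr((T_n^g)^2)=\int\!\!\int g(z)g(w)|K_n(z,w)|^2e^{-2n(\varphi(z)+\varphi(w))}\,dV_m(z)\,dV_m(w)$.

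For $k\geq 3$ the two approaches diverge. You derive the correct cyclic $k$-fold kernel representation and flag that Theorem \ref{Ber4} no longer applies directly, proposing either quantitative off-diagonal decay or an approximate-multiplicativity estimate $(T_n^g)^k\approx T_n^{g^k}$ to close the gap. The paper instead works in the eigenbasis $\{p_j^n\}$, writes $\mu_j^k=\langle(\Pi_nM_g)^kp_j^n,p_j^n\rangle_n$, and asserts that after summing over $j$ the expression collapses to the \emph{two}-variable integral
\[
\sum_{j}\mu_j^k=\int_{\C}\int_{\C}g(x_1)\,g(x_k)^{k-1}\,|K_n(x_1,x_k)|^2e^{-2n\varphi(x_1)-2n\varphi(x_k)}\,dV_m(x_1)\,dV_m(x_k),
\]
whereupon Theorem \ref{Ber4} again applies. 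Read literally this is the identity $Tr((T_n^g)^k)=Tr(T_n^g\,T_n^{g^{k-1}})$, which is not a formal consequence of the kernel iteration (it is precisely the approximate multiplicativity of Toeplitz operators that you isolate in route (ii)). So the obstacle you identify is real; the paper's $k\geq 3$ step presupposes rather than proves it. It is worth noting, though, that only $k=1$ and $k=2$ are actually used in the sequel (the proof of Theorem \ref{main} needs $Tr(T_n^g)$ and $\|T_n^g\|_{HS}^2=Tr((T_n^g)^2)$), so the issue has no bearing on the main results.
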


\begin{proof}
(1) Note that $T_n^g$ admits an ONB of eigenvectors $\{p_j^n\}_{j=1}^{d_n}.$ Letting 
\begin{equation}\label{ev}
\mu_j:=\langle T_n^gp^n_j,p^n_j\rangle_n=\langle p_j^n,p_j^n\rangle_g
\end{equation}
we obtain
$$Tr(T_n^g)=\sum_{j=1}^{d_n}\int_{\Bbb{C}^m}g(z)|p_j^n(z)|^2e^{-2n\varphi(z)}dV_m=\int_{\Bbb{C}^m}g(z)K_n(z,z)e^{-2n\varphi(z)}dV_m.$$

(2) It follows from Theorem \ref{Ber2} that
$$\frac{1}{d_n}Tr(T_n^g)\to \int_{\Bbb{C}^m}g(z)d\mu_{\varphi_e}.$$
Note that $(T_n^g)^2=\Pi_nM_g\Pi_nM_g$ and
$$Tr((T_n^g)^2)=\int_{\Bbb{C}^m}\int_{\Bbb{C}^m}g(z)g(w)|K_n(z,w)|^2e^{-2n(\varphi(z)+\varphi(w))}dV_m(z)dV_m(w).$$
Hence, by Theorem \ref{Ber4} we have
$$\frac{1}{d_n}Tr((T_n^g)^2)\to \int_{\Bbb{C}^m}\int_{\Bbb{C}^m}g(z)g(w) \Delta\wedge \mathbbm{1}_{S_{\varphi}}\mu_{\varphi_e}=\int_{\Bbb{C}^m} g^2(z) d\mu_{\varphi_e}.$$
Now, for $k\geq 3$ we have
$$\mu_j^k=\langle (\Pi_nM_g)^kp_j^n,p_j^n\rangle_n$$
and hence,
$$\sum_{j=1}^{d_n}\mu_j^k=\int_{\Bbb{C}^m}\int_{\Bbb{C}^m}g(z)g(w)^{k-1}|K_n(z,w)|^2e^{-2n\varphi(z)-2n\varphi(w)}dV_m(z)dV_m(w).$$
Thus, it follows from Theorem \ref{Ber4} that
$$\frac{1}{d_n}Tr((T_n^g)^k)\to \int_{\Bbb{C}^m}g^k(z)d\mu_{\varphi_e}.$$
\end{proof}

\subsection{Subgaussian Random Variables}\label{gauss1}

In this section we recall basic properties of subgaussian random variables. Let $(\Omega,\mathcal{F},\tau)$ be a probability space. A real valued random variable $X:\Omega\to\R$ is called \textit{subgaussian} with parameter $b>0$ (or $b$-subgaussian) if the moment generating function (MGF) of $X$ is  dominated by MGF of normalized Gaussian $N(0,b)$ that is
\begin{equation}\label{sgd}
\E[e^{tX}]\leq e^{\frac{b^2t^2}{2}}\ \text{for all}\ t\in\R.
\end{equation}
We remark that the above definition is non-standard (cf. \cite[\S 5.2.3]{Vershynin}); in particular (\ref{sgd}) forces that $\E[X]=0$ which is a convenient assumption for our setting. The classical examples of 1-subgaussian random variables are Standard Gaussian $N(0,1)$, Bernoulli random variables $\Bbb{P}[X=\pm 1]=\frac12$, and uniform distribution on $[-1,1]$. Moreover, all bounded random variables of mean zero are subgaussian. More precisely, if $\E[X]=0$ and $X\leq b$ almost surely then $X$ is $b$-subgaussian. We have the following characterization of subgaussian random variables.

\begin{prop}\cite[Lemma 5.5]{Vershynin}\label{sgb}
Let $X$ be a centered real random variable (i.e.\ $\E[X]=0$). Then the following are equivalent:
\begin{enumerate}
\item $\exists b>0$ such that $\E[e^{tX}]\leq e^{\frac{b^2t^2}{2}}\ \text{for all}\ t\in\R$.
\item $\exists c>0$ such that $\Bbb{P}[|X|>\alpha]\leq 2e^{-c\alpha^2}$ for every $\alpha>0$.
\item $\exists K>0$ such that $(\E[|X|^p)^{\frac1p}\leq K\sqrt{p}$ for all $p\geq 1$.
\item $\exists \kappa>0$ such that $\E[e^{X^2/\kappa^2}]\leq 2$.
\end{enumerate}
\end{prop}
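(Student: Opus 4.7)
The plan is to establish the standard cyclic chain $(1)\Rightarrow(2)\Rightarrow(3)\Rightarrow(4)\Rightarrow(1)$, which yields all six implications at once. For $(1)\Rightarrow(2)$ I would run a Chernoff--Markov bound: for $t,\alpha>0$,
\[
\Bbb{P}[X>\alpha]\le e^{-t\alpha}\,\E[e^{tX}]\le \exp(-t\alpha+b^2t^2/2),
\]
which is minimized at $t=\alpha/b^2$ and yields $\Bbb{P}[X>\alpha]\le e^{-\alpha^2/(2b^2)}$; the same bound applied to $-X$ gives $(2)$ with $c=1/(2b^2)$. For $(2)\Rightarrow(3)$ I would apply the layer-cake formula
\[
\E[|X|^p]=\int_0^\infty p\alpha^{p-1}\Bbb{P}[|X|>\alpha]\,d\alpha\le 2p\int_0^\infty \alpha^{p-1}e^{-c\alpha^2}\,d\alpha = p\,c^{-p/2}\,\Gamma(p/2),
\]
and then use $\Gamma(p/2)\le (p/2)^{p/2}$ for $p\ge 2$ (with small $p$ covered by monotonicity of $p\mapsto\E[|X|^p]^{1/p}$) to extract $\E[|X|^p]^{1/p}\le K\sqrt{p}$. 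For $(3)\Rightarrow(4)$ I would Taylor-expand
\[
\E\bigl[e^{X^2/\kappa^2}\bigr]=\sum_{k\ge 0}\frac{\E[X^{2k}]}{k!\,\kappa^{2k}}\le \sum_{k\ge 0}\frac{K^{2k}(2k)^k}{k!\,\kappa^{2k}}\le \sum_{k\ge 0}\Bigl(\frac{2eK^2}{\kappa^2}\Bigr)^k,
\]
using the Stirling bound $k!\ge (k/e)^k$, and then pick $\kappa$ so large that the common ratio is at most $1/2$; the geometric series is then dominated by $2$.

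The delicate step is $(4)\Rightarrow(1)$, where the hypothesis $\E[X]=0$ becomes essential. I would first feed $(4)$ back through Markov applied to $e^{X^2/\kappa^2}$ followed by the layer cake to obtain a polynomial moment bound $\E[|X|^p]^{1/p}\le C\sqrt{p}$, and then Taylor-expand
\[
\E[e^{tX}]=1+t\,\E[X]+\sum_{k\ge 2}\frac{t^k\,\E[X^k]}{k!},
\]
where the linear term vanishes by centering. The moment estimate combined with Stirling gives $|t^k\E[X^k]/k!|\le (|t|Ce/\sqrt{k})^k$, so for $|t|$ below a threshold $t_0$ the tail beyond $k=2$ is controlled by the quadratic term, producing $\E[e^{tX}]\le 1+b_0^2 t^2/2\le e^{b_0^2 t^2/2}$. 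For $|t|\ge t_0$ I would use the elementary AM--GM inequality $tX\le X^2/\kappa^2+\kappa^2 t^2/4$ together with $(4)$ to conclude $\E[e^{tX}]\le 2e^{\kappa^2 t^2/4}$, and the prefactor $2$ is absorbed into the exponent by slightly enlarging the constant; taking $b=\max(b_0,b_1)$ then gives $(1)$.

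The main obstacle is precisely this last implication: without the centering hypothesis $\E[X]=0$ the Gaussian MGF estimate cannot hold near $t=0$, so one has to identify the cancellation of the linear term, match it against a quadratic bound on the Taylor tail in the small-$t$ regime, and then carefully glue that small-$t$ estimate to the crude large-$t$ bound coming from $(4)$ itself into a single constant $b$.
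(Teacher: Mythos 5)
Your cyclic chain $(1)\Rightarrow(2)\Rightarrow(3)\Rightarrow(4)\Rightarrow(1)$ is correct, including the delicate last step where the centering $\E[X]=0$ kills the linear term in the Taylor expansion for small $t$ and the bound $tX\leq X^2/\kappa^2+\kappa^2t^2/4$ handles large $t$. The paper gives no proof of its own -- it simply cites Vershynin's Lemma 5.5 -- and your argument is essentially the standard one from that reference, so there is nothing to add.
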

The last property is known as $\psi_2$ condition. More precisely, a centered random variable $X$ is subgaussian if and only if its \textit{Orlicz norm}
\begin{eqnarray}
\|X\|_{\psi_2}:&=&\inf_{\kappa>0}\{\E[e^{X^2/\kappa^2}]\leq 2\}
\end{eqnarray}
is finite.

\subsubsection{Hanson-Wright Inequality}
Let $X_j$ be independent subgaussian random variables and $\kappa_j:=\|X_j\|_{\psi_2}$. We denote the joint probability distribution of $X:=(X_1,\dots, X_N)$ by $\Bbb{P}$. We also let $A=[A_{ij}]$ be a square matrix with real entries. We denote its operator norm 
$$\|A\|:=\max_{\|v\|_2\leq 1}\|Av\|$$ where $\|\cdot\|_2$ denotes Euclidean norm and the Hilbert-Schmidt norm by
$$\|A\|_{HS}:=(\sum_{i,j}|a_{ij}|^2)^{1/2}=[Tr(AA^T)]^{1/2}.$$
We consider the random quadratic form
$$X\to X^TAX.$$
The following concentration inequality goes back to Hanson-Wright \cite{HW}. The version we use here is due to Rudelson-Vershynin \cite{RuVe}:
\begin{thm}[Hanson-Wright Inequality]\label{HWI}
Let $A$ be a $N\times N$ square matrix and $X=(X_1,\dots,X_N)\in\R^N$ be a random vector whose components $X_j$ are independent subgaussian variables such that $$\|X_j\|_{\psi_2}\leq K$$ for $j=1,\dots,N$. Then for each $t\geq 0$ 
$$\Bbb{P}[|X^TAX-\E[X^TAX]|>t]\leq 2\exp\big(-c\min\{\frac{t^2}{K^4\|A\|_{HS}^2},\frac{t}{K^2\|A\|}\}\big)$$ 
where $c>0$ is an absolute constant which does not depend on $t$.
\end{thm}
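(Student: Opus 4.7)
The plan is to adapt the now-standard Rudelson--Vershynin proof by decomposing the centered quadratic form as
$$X^T A X - \E[X^T A X] = \underbrace{\sum_{i=1}^N A_{ii}\bigl(X_i^2 - \E[X_i^2]\bigr)}_{S_1} \;+\; \underbrace{\sum_{i\neq j} A_{ij} X_i X_j}_{S_2},$$
bounding each piece separately, and combining the two tail estimates via a union bound applied to the event $\{|S_1| > t/2\} \cup \{|S_2| > t/2\}$.

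For the diagonal part $S_1$, each summand $A_{ii}(X_i^2 - \E[X_i^2])$ is a centered sub-exponential random variable. Indeed, by Proposition \ref{sgb}(3) we have $\E|X_i|^{2p} \leq (cK\sqrt{2p})^{2p}$, so $X_i^2 - \E[X_i^2]$ has sub-exponential constant of order $K^2$. A direct application of Bernstein's inequality for independent sub-exponential random variables then yields
$$\Bbb{P}\bigl[|S_1| > t/2\bigr] \leq 2\exp\Bigl(-c\min\Bigl\{\tfrac{t^2}{K^4\sum_i A_{ii}^2},\,\tfrac{t}{K^2\max_i |A_{ii}|}\Bigr\}\Bigr),$$
and the elementary bounds $\sum_i A_{ii}^2 \leq \|A\|_{HS}^2$ and $\max_i |A_{ii}| \leq \|A\|$ put this in the required two-regime form.

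For the off-diagonal part $S_2$, the strategy is to bound the moment generating function $\E[\exp(\lambda S_2)]$ and then invoke Markov's inequality. Let $X'$ be an independent copy of $X$; the decoupling inequality of de la Pe\~na--Montgomery-Smith gives
$$\E[\exp(\lambda S_2)] \leq \E\bigl[\exp\bigl(4\lambda \textstyle\sum_{i,j} A_{ij} X_i X_j'\bigr)\bigr].$$
Conditioning on $X$, the inner expression is the MGF of the linear combination $\sum_j (AX)_j X_j'$ of independent subgaussians, so is at most $\exp(CK^2\lambda^2 \|AX\|_2^2)$. It remains to estimate $\E[\exp(\mu\|AX\|_2^2)]$ with $\mu = CK^2\lambda^2$. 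A standard reduction, based on the moment characterization Proposition \ref{sgb}(3) combined with the exact Gaussian identity $\E[\exp(\mu\|Ag\|_2^2)] = \det(I-2\mu A^T A)^{-1/2}$ for $g \sim N(0,I_N)$, shows that for $\mu\|A\|^2$ below an absolute threshold one has $\E[\exp(\mu\|AX\|_2^2)] \leq \exp(C' K^2\mu\|A\|_{HS}^2)$.

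Assembling gives $\E[\exp(\lambda S_2)] \leq \exp(C'' K^4\lambda^2\|A\|_{HS}^2)$ provided $\lambda K^2\|A\| \leq c$; Markov's inequality followed by optimization over $\lambda$ then produces the two-regime Gaussian/exponential tail for $S_2$ in the required form. I expect the principal difficulty to be the off-diagonal piece: the decoupling step is the crux, and one must track the $K$-dependence carefully through the two successive MGF bounds so that the final exponent is genuinely $-c\min\{t^2/(K^4\|A\|_{HS}^2),\,t/(K^2\|A\|)\}$ rather than something weaker by a factor of $N$. By contrast, the diagonal part $S_1$ is essentially immediate from Bernstein.
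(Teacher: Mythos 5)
The paper does not prove Theorem \ref{HWI}: it is quoted as a known result in the form due to Rudelson and Vershynin, and your sketch is precisely their argument --- the diagonal/off-diagonal splitting, Bernstein's inequality for the sub-exponential diagonal sum, and decoupling plus a Gaussian MGF comparison for the off-diagonal part --- so approach-wise it matches the source the paper relies on. The one slip to fix is in the intermediate claim: the bound $\E[\exp(\mu\|AX\|_2^2)]\leq \exp(C'K^2\mu\|A\|_{HS}^2)$ needs $\mu K^2\|A\|^2$ (not $\mu\|A\|^2$) below an absolute threshold --- as written it already fails for a single Gaussian entry of variance $K^2$ with $K$ large --- but since your final constraint $\lambda K^2\|A\|\leq c$ is the correct one, the assembled two-regime tail bound goes through unchanged.
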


\subsubsection{Complex Case}
Next, we formulate Hanson-Wright inequality for complex random variables and Hermitian matrices with complex entries. Let $X:\Omega\to\Bbb{C}$ be a complex valued random variable. We denote the real and imaginary parts of $X$ by $Re(X)$ and $Im(X)$ respectively. We say that $X$ is subgaussian if $Re(X)$ and $Im(X)$ are independent subgaussian random variables. For a Hermitian square matrix $A$ we let 
$$\tilde{A}=\begin{bmatrix} Re(A) & -Im(A) \\ Im(A) & Re(A)\end{bmatrix}$$ where $Re(A):=[Re(a_{ij})]$ and $Im(A)=[Im(a_{ij})]$. Under these definitions we have $\|\tilde{A}\|=\|A\|$ and $\|\tilde{A}\|_{HS}=\sqrt{2}\|A\|_{HS}$. We also let $X^*:=\overline{X}^T$. Then applying Theorem \ref{HWI} we obtain:

\begin{thm}[Complex Hanson Wright Inequality] \label{HWII}Let $A$ be a $N\times N$ Hermitian matrix and $X=(X_1,\dots,X_N)\in\Bbb{C}^N$ be a random vector whose components $X_j$ are independent complex subgaussian random variables such that $\max_j(\|Re(X_j)\|_{\psi_2},\|Im(X_j)\|_{\psi_2})\leq K$. Then there exists an absolute constant $c>0$ such that
$$\Bbb{P}[|X^*AX-\E[X^*AX]|>t]\leq 2\exp\big(-c\min\{\frac{t^2}{K^4\|A\|_{HS}^2},\frac{t}{K^2\|A\|}\}\big)$$ for each $t\geq0$.
\end{thm}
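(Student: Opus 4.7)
The plan is to reduce the complex Hanson--Wright inequality to the real one by identifying the complex quadratic form $X^{*}AX$ with a real quadratic form on a vector of dimension $2N$ and then invoking Theorem \ref{HWI}. Write $X = U + iV$ with $U = \mathrm{Re}(X),V = \mathrm{Im}(X) \in \R^{N}$, and $A = A_1 + iA_2$ with $A_1 = \mathrm{Re}(A)$, $A_2 = \mathrm{Im}(A)$; Hermiticity of $A$ says $A_1^{T}=A_1$ and $A_2^{T}=-A_2$. Form the augmented real vector
$$
Y := \begin{pmatrix} U \\ V \end{pmatrix}\in \R^{2N}.
$$

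The first step is the algebraic identity $X^{*}AX = Y^{T}\tilde A\, Y$. Expanding $X^{*}AX$ and using $A_1$ symmetric, $A_2$ skew-symmetric (so $u^{T}A_2 u = 0$ and $v^{T}A_2 u = -u^{T}A_2 v$ for real vectors), one gets $X^{*}AX = U^{T}A_1 U + V^{T}A_1 V - 2 U^{T}A_2 V$; a direct block computation with $\tilde A = \bigl[\begin{smallmatrix} A_1 & -A_2 \\ A_2 & A_1 \end{smallmatrix}\bigr]$ produces exactly the same expression. Note that $\tilde A$ is a real symmetric matrix, so the real Hanson--Wright inequality applies. Taking expectations gives $\E[Y^{T}\tilde AY]=\E[X^{*}AX]$.

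Next, verify the hypotheses of Theorem \ref{HWI}. By the definition of complex subgaussianity in this paper, for each $j$ the real and imaginary parts $\mathrm{Re}(X_j)$ and $\mathrm{Im}(X_j)$ are independent, and they are also independent across $j$ since the $X_j$ are independent. Hence the $2N$ components of $Y$ are independent real subgaussian variables with $\|Y_\ell\|_{\psi_2}\le K$. The norm identities $\|\tilde A\|_{HS} = \sqrt{2}\,\|A\|_{HS}$ and $\|\tilde A\| = \|A\|$ recalled in the text then let us translate the real tail bound
$$
\Bbb{P}\bigl[|Y^{T}\tilde AY - \E[Y^{T}\tilde AY]| > t\bigr] \le 2\exp\Bigl(-c'\min\Bigl\{\tfrac{t^{2}}{K^{4}\|\tilde A\|_{HS}^{2}},\tfrac{t}{K^{2}\|\tilde A\|}\Bigr\}\Bigr)
$$
into exactly the stated complex tail bound, absorbing the factor $2$ from $\|\tilde A\|_{HS}^{2}=2\|A\|_{HS}^{2}$ into the absolute constant.

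There is no real obstacle; the only point that requires a little care is the bookkeeping in the bilinear expansion, in particular the cancellations $U^{T}A_2 U = V^{T}A_2 V = 0$ and $U^{T}A_1 V = V^{T}A_1 U$ that make $X^{*}AX$ real and match it to $Y^{T}\tilde AY$. Everything else is a direct application of the real Hanson--Wright bound and the elementary linear-algebraic identities for $\tilde A$.
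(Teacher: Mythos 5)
Your reduction is correct and is exactly the route the paper takes: the paper introduces the same real symmetric matrix $\tilde A=\bigl[\begin{smallmatrix}\mathrm{Re}(A)&-\mathrm{Im}(A)\\ \mathrm{Im}(A)&\mathrm{Re}(A)\end{smallmatrix}\bigr]$, records $\|\tilde A\|=\|A\|$ and $\|\tilde A\|_{HS}=\sqrt2\,\|A\|_{HS}$, and then applies the real Hanson--Wright inequality (Theorem \ref{HWI}) to the augmented real vector, absorbing the factor $2$ into the absolute constant. Your write-up merely makes explicit the identity $X^{*}AX=Y^{T}\tilde AY$ and the independence of the $2N$ real components, which the paper leaves implicit.
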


\begin{rem}\label{rem}
Finally, we remark that in case $A$ is Hermitian matrix and $X_j$ are real valued subgaussian by setting $
\tilde{A}:=\begin{bmatrix} Re(A)\\ Im(A)\end{bmatrix}$ the corresponding concentration inequality follows from \cite[Theorem 2.1]{RuVe}. 
\end{rem}

\subsection{Mass Equidistribution for Random Polynomials}\label{S4}
 For a fixed ONB $\{P_j^n\}_{j=1}^{d_n}$ of $\mathcal{P}_n$ with respect to the norm induced by (\ref{in}) we consider \textit{subgaussian random polynomials}
$$f_n(z)=\sum_{j=1}^{d_n}c^n_jP_j^n(z)$$ where $c^n_j$ are independent identically distributed (iid) real or complex subgaussian random variables of mean zero and unit variance i.e.\ $\E[|c^n_j|^2]=1$. We endow the vector space $\mathcal{P}_n$ with the $d_n$-fold product probability measure $Prob_n$ induced by the law of $c_j^n$. We also consider the product probability space $\prod_{n=1}^{\infty}(\pn,Prob_n)$ whose elements are sequences $(f_1,f_2,\dots)$ of random polynomials.

Let $g:\C\to\Bbb{R}$ be a bounded function, in what follows we consider the random variables
$$X_n^g:\pn\to \Bbb{R}$$
\begin{eqnarray*}
X_n^g(f_n)&=&\int_{\Bbb{C}^m}g(z)|f_n(z)|^2e^{-2n\varphi(z)}dV_m\\
&=& \langle T_n^g(f_n),f_n\rangle_n.
\end{eqnarray*}
Next, we obtain asymptotic expected value of $X_n^g$:

\begin{prop}\label{exp}
Assume that random coefficients $c_j^n$ are iid real or complex subgaussian random variables of mean zero and unit variance. Then
$$\Bbb{E}[X_n^g]=Tr(T_n^g).$$
In particular, $$\frac{1}{d_n}\Bbb{E}[X_n^g] \to \int_{\Bbb{C}^m} g(z)d\mu_{\varphi_e}\ \text{as}\ n\to \infty.$$
\end{prop}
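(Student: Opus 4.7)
The plan is to split the proposition into two pieces: an exact algebraic identity $\mathbb{E}[X_n^g]=\mathrm{Tr}(T_n^g)$, which is a second-moment computation, and the asymptotic, which is then an immediate consequence of Proposition~\ref{prop2}.

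For the identity, I would expand $f_n=\sum_{j=1}^{d_n} c_j^n P_j^n$ in the fixed ONB and use the defining relation $\langle T_n^g p,q\rangle_n=\langle p,q\rangle_g$ to obtain
\[
X_n^g(f_n) \;=\; \langle T_n^g f_n, f_n\rangle_n \;=\; \sum_{j,k=1}^{d_n} c_j^n \overline{c_k^n}\,\langle T_n^g P_j^n, P_k^n\rangle_n.
\]
The quantity $\langle T_n^g P_j^n, P_k^n\rangle_n=\langle P_j^n, P_k^n\rangle_g$ is the $(k,j)$-entry of the matrix of $T_n^g$ in the chosen ONB. Since the $c_j^n$ are independent with $\mathbb{E}[c_j^n]=0$ and $\mathbb{E}[|c_j^n|^2]=1$, independence yields $\mathbb{E}[c_j^n \overline{c_k^n}]=\delta_{jk}$ in both the real and complex cases. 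Linearity of expectation then kills every off-diagonal term and leaves
\[
\mathbb{E}[X_n^g] \;=\; \sum_{j=1}^{d_n} \langle T_n^g P_j^n, P_j^n\rangle_n \;=\; \mathrm{Tr}(T_n^g),
\]
as claimed.

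The asymptotic statement $\frac{1}{d_n}\mathbb{E}[X_n^g]\to \int_{\mathbb{C}^m} g\,d\mu_{\varphi_e}$ then follows directly from Proposition~\ref{prop2}(2) specialized to $k=1$. Equivalently, one can substitute the trace formula of Proposition~\ref{prop2}(1) to write $\frac{1}{d_n}\mathrm{Tr}(T_n^g)$ as the integral of $g$ against $\frac{1}{d_n}K_n(z,z)e^{-2n\varphi(z)}dV_m$ and invoke Theorem~\ref{Ber2}: the $L^1$-convergence of the Bergman density is strong enough to pass bounded multipliers to the limit.

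There is essentially no substantive obstacle here. The only two points deserving a line of care are: verifying $\mathbb{E}[c_j^n \overline{c_k^n}]=\delta_{jk}$ simultaneously in the real and complex settings (note that only the Hermitian pairings $c_j^n \overline{c_k^n}$ enter $X_n^g$, so no properness/second-moment condition on $\mathbb{E}[(c_j^n)^2]$ is needed), and observing that the \emph{bounded} multiplier hypothesis on $g$ is compatible with Theorem~\ref{Ber2} precisely because that theorem delivers $L^1$—rather than merely weak-$*$—convergence of the Bergman densities.
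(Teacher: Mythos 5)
Your proof is correct and follows essentially the same route as the paper: a second-moment computation showing that only diagonal terms survive (the paper phrases it as $\mathbb{E}[|f_n(z)|^2]=K_n(z,z)$ pointwise followed by Fubini, while you expand directly in matrix entries of $T_n^g$, which is the same calculation), and then the asymptotics via Proposition \ref{prop2} and Theorem \ref{Ber2}. No gaps.
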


\begin{proof}
Note that $$|f_n(z)|^2=\sum_{1\leq j,k\leq d_n}c_j\overline{c_k}P_j(z)\overline{P_k(z)}.$$ Since $c_j^n$ are iid of mean zero and unit variance, we have
$\E[|f_n(z)|^2]=K_n(z,z)$ for every $z\in\C$. Thus, by Fubini's Theorem
\begin{equation}
\Bbb{E}[X_n^g] =\int_{\C} g(z)K_n(z,z)e^{-2n\varphi(z)}dV_m= Tr(T_n^g).
\end{equation}
Hence, the second assertion follows from Proposition \ref{prop2}.
\end{proof}

\begin{proof}[Proof of Theorem \ref{main}]
We prove the case where $c_j^n$ are iid complex subgaussian random variables. The real case follows from the same argument and Remark \ref{rem}.
 
For the fixed ONB basis of eigenfunctions $\{P_j^n\}_{j=1}^{d_n}$ we may identify the random polynomials 
$$f_n=\sum_{j=1}^{d_n}c_j^nP_j^n$$ with the subgaussian random vector 
\begin{equation}
Z_n:=(c_1^n,\dots,c_{d_n}^n)\in \Bbb{C}^{d_n}
\end{equation}
and denote the probability law of $Z_n$ by $\Bbb{P}_n$. We also denote Euclidean norm of $Z_n$ by $\|Z_n\|$. First, we observe that
\begin{equation}\label{nc}
\Bbb{P}_n[\big\{Z_n\in\Bbb{C}^{d_n}: \|Z_n\|\leq d_n\ \text{for sufficiently large $n$}\big\}]=1.
\end{equation}
Indeed, by Proposition \ref{sgb} (2) there exists $b>0$ (independent of $n$) such that
\begin{eqnarray}
\Bbb{P}_n[\big\{Z_n\in\Bbb{C}^{d_n}: \|Z_n\|> d_n\big\}] & =& \Bbb{P}_n[\big\{c_j^n\in\Bbb{C}^{d_n}: \sum_{j=1}^{d_n}|c_j^n|^2> d_n^2\big\}] \nonumber\\
&\leq & \Bbb{P}_n[\big\{c_j^n\in\Bbb{C}^{d_n}: |c_j^n|^2>d_n\ \text{for some $j$}\big\}] \nonumber \\
&\leq& 2d_n exp(-bd_n). \label{bc1}
\end{eqnarray}
Since the right hand side of the last inequality (\ref{bc1}) is summable over $n$, the claim follows from Borel-Cantelli lemma.

Next, we identify the operator $T_n^g$ with a Hermitian $d_n\times d_n$ matrix $A_n^g.$ Note that with the new notation we have
$$X_n^g(f_n)=\langle A_n^gZ_n,Z_n\rangle$$ where $\langle,\rangle$ denotes the standard Hermitian inner product on $\Bbb{C}^{d_n}$. Then by Theorem \ref{HWII} there exists an absolute constant $c>0$ such that

\begin{eqnarray}
Prob_n[|X_n^g(f_n)-\E[X_n^g]|>t] &=& \Bbb{P}_n[|Z_n^*A_n^gZ_n-\E[Z_n^*A_n^gZ_n]|>t]\\
&\leq& 2\exp\big(-c\min\{\frac{t^2}{K^4\|A_n^g\|_{HS}^2},\frac{t}{K^2\|A_n^g\|}\}\big)
\end{eqnarray}
where $K:=\|c_j^n\|_{\psi_2}\geq1$. Note that $\displaystyle\|A_n^g\|\leq \sup_{z\in\C}|g(z)|$ and by Proposition \ref{prop2} we have $$\|A_n^g\|_{HS}^2=Tr((T_n^g)^2)=O(d_n).$$ Letting $t=\epsilon d_n$, by Theorem \ref{Ber2} we deduce that for sufficiently large $n$
$$Prob_n[|\frac{1}{d_n}X_n^g(f_n)-\int_{\C}g(z)d\mu_{\varphi_e}|>\epsilon]\leq 2\exp(-C_gd_n)$$ where $C_g>0$ is an absolute constant that deos not depend on $n$.
Hence, it follows from Borel-Cantelli lemma that there exists a set $\mathcal{A}_g\subset\prod_{n=1}^{\infty}\pn$ of probability one such that
$$\frac{1}{d_n}X_n^g(f_n) \to \int gd\mu_{\varphi_e}$$ for every $\{f_n\}\in \mathcal{A}_g.$
Next, we let $\{g_j\}_{j=1}^{\infty}$ be a countable dense subset of $C(S_{\varphi})$ and define 
\begin{equation}
\mathcal{A}:=\cap_{j=1}^{\infty}\mathcal{A}_{g_j}\cap\big\{(f_n)\in \prod_{n=1}^{\infty}\pn: \|p_n\|_n^2\leq d_n\ \text{for sufficiently large}\ n\big\}.
\end{equation}
By (\ref{nc}) and being countable intersection of sets with probability one, $Prob(\mathcal{A})=1$. Moreover, for each $\{f_n\}\in \mathcal{A}$ we have
\begin{equation}
\frac{1}{d_n}|f_n(z)|^2e^{-2n\varphi(z)}dV_m \to d\mu_{\varphi_e}
\end{equation}
as measures on $S_{\varphi}$. Indeed, for each $u\in C(S_{\varphi})$ and $\epsilon>0$ choose $g_j$ such that
$\|u-g_j\| _{S_\varphi}\leq \epsilon$. Then for sufficiently large $n$ we have
$$|\frac{1}{d_n}X_n^{g_j}(f_n)-\int g_jd\mu_{\varphi_e}|\leq \epsilon $$
hence,
\begin{eqnarray*}
|\frac{1}{d_n}\int_{S_{\varphi}}u(z)|f_n(z)|^2e^{-2n\varphi(z)}dV_m &-& \int_{S_{\varphi}}u(z)d\mu_{\varphi_e}|\\
 &\leq& \|u-g_j\| _{S_\varphi}( \frac{1}{d_n}\|f_n\|_n^2 +1)+ |\frac{1}{d_n}X_n^{g_j}(f_n)-\int g_jd\mu_{\varphi_e}| \\
&\leq& 3\epsilon.
\end{eqnarray*}
This proves the first assertion.

Since the hypotheses of Theorem \ref{basic} hold with probability one we obtain the second assertion.
\end{proof}

\section{Random Orthonormal Bases and Distribution of Zeros}\label{S5}
In this section, we consider \textit{random orthogonal polynomials}. More precisely, for a fixed ONB $\{P^n_j\}_{j=1}^{d_n}$ for $\pn$ with respect to the norm (\ref{in}) we may identify each ONB $\mathcal{B}=\{F_j^n\}_{j=1}^{d_n}$ for $\pn$ with a unitary matrix $U_{\mathcal{B}}\in \mathcal{U}(d_n).$ Thus, we consider the set of of all orthonormal bases for $\pn$ as a probability space by identifying it with the unitary ensemble $ \mathcal{U}(d_n)$ endowed with the Haar probability measure. Moreover, we let $\mathcal{ONB}:=\prod_{n\geq1} \mathcal{U}(d_n)$ be the product probability space. In this context, we have the following result (cf. \cite[Theorem 1.2]{SZ} see also \cite{Zel}):

\begin{thm}\label{onb}
For almost every sequence of ONB $\mathcal{B}=\{F_j^n\}$ in $\mathcal{ONB}$ there exists a subsequence $\Lambda_n\subset \{1,\dots,d_n\}$ of density one (i.e. $\frac{\#\Lambda_n}{d_n}\to 1$ as $n\to \infty$) such that
$$\lim_{\substack{n\to \infty\\ j\in \Lambda_n}}\int_{\Bbb{C}^m}g(z)|F_j^n(z)|^2e^{-2n\varphi(z)}dV_m=\int_{\Bbb{C}^m} g(z)d\mu_{\varphi_e}$$ for every bounded continuous function $g:\C\to \R$. If $m\geq2$ then the entire sequence has this property.
\end{thm}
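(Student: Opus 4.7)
The strategy, following \cite{SZ} and \cite{Zel}, is a moment computation under the Haar measure on $\mathcal{U}(d_n)$ combined with a diagonal extraction. Fix a bounded continuous $g:\C\to\R$ and represent a Haar-random ONB $\mathcal{B}=\{F_j^n\}$ by a unitary $U=(U_{jk})\in\mathcal{U}(d_n)$ via $F_j^n=\sum_k U_{jk}P_k^n$. If $A_n^g$ denotes the matrix of $T_n^g$ in $\{P_k^n\}$, then
$$X_n^g(F_j^n):=\int_{\C} g(z)|F_j^n(z)|^2 e^{-2n\varphi(z)}\,dV_m = U_j^*A_n^g U_j,$$
and each column $U_j$ is marginally uniform on the complex unit sphere $S^{2d_n-1}\subset\Bbb{C}^{d_n}$. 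Standard spherical-moment identities give $\E[U_j^*AU_j]=\mathrm{Tr}(A)/d_n$ and $\E[(U_j^*AU_j)^2]=[(\mathrm{Tr}\,A)^2+\mathrm{Tr}(A^2)]/[d_n(d_n+1)]$. Setting $m_n:=\mathrm{Tr}(A_n^g)/d_n$, Proposition \ref{prop2} yields $m_n\to\int g\,d\mu_{\varphi_e}$ and $\mathrm{Tr}((A_n^g)^2)/d_n\to\int g^2\,d\mu_{\varphi_e}$. Combining these with the basis-independence $\sum_j X_n^g(F_j^n)=\mathrm{Tr}(A_n^g)$ gives
$$\E\Big[\sum_{j=1}^{d_n}\bigl(X_n^g(F_j^n)-m_n\bigr)^2\Big] = \frac{d_n\,\mathrm{Tr}((A_n^g)^2)-(\mathrm{Tr}\,A_n^g)^2}{d_n(d_n+1)} = O(1).$$

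Consequently $\E[d_n^{-1}\sum_j(X_n^g(F_j^n)-m_n)^2]=O(1/d_n)$. When $m\geq 2$ this is summable in $n$, and Markov plus Borel-Cantelli gives $d_n^{-1}\sum_j(X_n^g(F_j^n)-m_n)^2\to 0$ almost surely; for $m=1$ the same conclusion follows upon replacing Chebyshev with the Gromov-Milman concentration estimate $\Bbb{P}[|U_j^*A_n^g U_j-m_n|>\epsilon]\leq C\exp(-cd_n\epsilon^2/\|g\|_\infty^2)$ on the sphere. In either case, the density of $\{j:|X_n^g(F_j^n)-m_n|>\epsilon\}$ in $\{1,\ldots,d_n\}$ tends to zero almost surely for every $\epsilon>0$. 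Applying this simultaneously to each member of a countable dense family $\{g_k\}\subset C(S_{\varphi})$, intersecting the countably many probability-one events and extracting diagonally produces a density-one subsequence $\Lambda_n$ along which $X_n^{g_k}(F_j^n)\to\int g_k\,d\mu_{\varphi_e}$ for every $k$. Since $\|F_j^n\|_n=1$, density of $\{g_k\}$ together with the triangle inequality promotes this to weak-$*$ convergence against every bounded continuous $g$, yielding the first assertion.

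For the stronger statement when $m\geq 2$, the concentration bound above combined with a union bound over $j$ gives $\Bbb{P}[\max_{1\leq j\leq d_n}|X_n^g(F_j^n)-m_n|>\epsilon]\leq Cd_n\exp(-cd_n\epsilon^2)$, which is summable in $n$; Borel-Cantelli then produces almost-sure uniform-in-$j$ convergence, and the diagonal argument extends this over the countable dense family. The principal technical obstacle is precisely this passage from average-in-$j$ to uniform-in-$j$ control: second-moment Chebyshev estimates are too weak in low dimension (summability in Borel-Cantelli fails for $m=1$), so one must invoke exponential concentration on $\mathcal{U}(d_n)$ (equivalently on $S^{2d_n-1}$) --- either through Gromov-Milman concentration on positively curved manifolds or via sufficiently high moments computed by Weingarten calculus --- to close the argument.
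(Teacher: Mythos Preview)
Your second-moment computation $\E\big[\sum_j(X_n^g(F_j^n)-m_n)^2\big]=O(1)$ is exactly the variance estimate the paper obtains (via \cite[Lemma~5.1]{Zel}), and your treatment of the $m\geq 2$ average through summability of $O(1/d_n)$ coincides with the paper's. The routes diverge in how the density-one assertion is handled for $m=1$ and in how the passage from average-in-$j$ to uniform-in-$j$ control is made.

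The paper never invokes concentration of measure. For the density-one assertion it bounds $\mathrm{Var}\big[d_n^{-1}\overline{\mathcal{Y}}_n^g\big]=O(1)$ crudely (by the square of the sup) and applies Kolmogorov's strong law to the independent sequence $\{d_n^{-1}\overline{\mathcal{Y}}_n^g\}_n$ to obtain $\frac{1}{N}\sum_{n\leq N} d_n^{-1}\overline{\mathcal{Y}}_n^g\to 0$ almost surely, then extracts a density-one set via \cite[Theorem~1.20]{Walters}. For $m\geq 2$ it simply notes $\sum_n \E[d_n^{-1}\overline{\mathcal{Y}}_n^g]<\infty$, whence $d_n^{-1}\overline{\mathcal{Y}}_n^g\to 0$ almost surely.

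Your alternative via Gromov--Milman is legitimate and in fact stronger, but the write-up is internally inconsistent. The bound $\Bbb{P}\big[|U_j^*A_n^gU_j-m_n|>\epsilon\big]\leq C\exp(-cd_n\epsilon^2/\|g\|_\infty^2)$ does hold (the quadratic form is $2\|g\|_\infty$-Lipschitz on $S^{2d_n-1}$), and the union bound over $j$ then gives $d_n\exp(-cd_n\epsilon^2)$, which is summable for \emph{every} $m\geq 1$, not only $m\geq 2$. So once you commit to concentration, your own argument already yields the entire-sequence conclusion in all dimensions, and the density-one detour for $m=1$ becomes superfluous. Conversely, if you wish to stay within second moments for $m=1$, the correct tool is Kolmogorov's SLLN on the Ces\`aro averages as in the paper, not pointwise concentration. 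As written, your $m=1$ step conflates the two strategies: you state a concentration bound for an individual column $U_j$ but claim it yields almost-sure convergence of the $j$-average $d_n^{-1}\sum_j(\cdot)^2$, without indicating how one passes from the former to the latter.

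In summary: the paper's argument is more elementary (second moments, Kolmogorov, a density extraction lemma) and matches exactly the density-one versus full-sequence dichotomy in the statement; your concentration route is a genuine alternative that actually yields more, but you should either carry it through uniformly (and then remark that the full sequence equidistributes for all $m\geq 1$) or drop it and follow the paper's LLN approach for the first assertion.
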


\begin{proof}
We consider the random variables 
$$\mathcal{Y}_n^g:\mathcal{ONB}\to \Bbb{R}$$
$$\mathcal{Y}_n^g(\mathcal{B}):=\sum_{j=1}^{d_n}|\langle U_{\mathcal{B}}^*A_n^gU_{\mathcal{B}}e_j,e_j\rangle|^2$$ where $A_n^g$ is the matrix representing the Toeplitz operator $T_n^g$ with symbol $g$ and $e_j$ is the standard basis element whose $j^{th}$ coordinate is 1. By (\ref{ev}) and invariance of Haar measure under left-multiplication with a unitary matrix
\begin{eqnarray}
\Bbb{E}[\langle U^*_{\mathcal{B}}A_n^gU_{\mathcal{B}}e_j,e_j\rangle] &=&  \int_{\mathcal{U}(d_n)}(\langle U^*A_n^gUe_j,e_j\rangle)dU \nonumber\\
&=& \sum_{j=1}^{d_j}\mu_j\E[|U_{1j}|^2] \nonumber\\
&=&\frac{1}{d_n}Tr(A_n^g).
\end{eqnarray} 
Next, we consider the standardized random variables 
\begin{eqnarray*}
\overline{\mathcal{Y}}_n^g(\mathcal{B}):&=&\sum_{j=1}^{d_n}|\langle U_{\mathcal{B}}^*A_n^gU_{\mathcal{B}}e_j,e_j\rangle-\frac{1}{d_n}Tr(T_n^g)|^2\\
&=& \sum_{j=1}^{d_n}|\langle U_{\mathcal{B}}^*\tilde{A}_n^gU_{\mathcal{B}}e_j,e_j\rangle|^2\\
\end{eqnarray*}
where $\tilde{A}_n^g=A_n^g-\frac{1}{d_n}Tr(T_n^g)I_n$ is of trace zero. 

Then by \cite[Lemma 5.1]{Zel} we obtain
\begin{eqnarray*}
\Bbb{E}[\overline{\mathcal{Y}}_n^g] 
&=&\sum_{j=1}^{d_n}\Bbb{E}[|\langle U^*_{\mathcal{B}}\tilde{A}_n^gU_{\mathcal{B}}e_j,e_j\rangle|^2]\\
&=& \int_{\Bbb{C}^m}g^2d\mu_{\varphi_e}-(\int_{\Bbb{C}^m}gd\mu_{\varphi_e})^2+o(1) \ \text{as}\ n\to\infty.
\end{eqnarray*}
This implies that
\begin{equation}\label{aex}
\lim_{N\to\infty}\frac1N\sum_{n=1}^N\Bbb{E}[\frac{1}{d_n}\overline{\mathcal{Y}}_n^g]\to 0
\end{equation}
since $\frac1N\sum_{n=1}^N\frac{1}{d_n}\to 0$ as $N\to\infty$.

On the other hand, since $g$ is bounded continuous function we have
 $$|\langle U_{\mathcal{B}}^*A_n^gU_{\mathcal{B}}e_j,e_j\rangle|=|\int_{\C}g|P_j^n|^2e^{-2n\varphi}dV_m|\leq \sup_{\C}|g|$$ which implies that
$$Var[\frac{1}{d_n}\overline{\mathcal{Y}}_n^g]\leq \sup_{\mathcal{B}}(\frac{1}{d_n}\overline{\mathcal{Y}}_n^g(\mathcal{B})\big)^2=O(1)$$ where the implied constant depends on $g$ but independent of $n$.  Since $\frac{1}{d_n}\overline{\mathcal{Y}}_n^g$ are independent random variables whose variances are bounded it follows from (\ref{aex}) and Kolmogorov's law of large numbers that as $N\to \infty$
$$\frac1N\sum_{n=1}^N\frac{1}{d_n}\overline{\mathcal{Y}}_n^g\to 0$$ almost surely. Thus, the first assertion follows from \cite[Theorem 1.20]{Walters}.

For the second assertion, note that for $m\geq2$ we have $\Bbb{E}[\frac{1}{d_n}\overline{\mathcal{Y}}_n^g]=O(\frac{1}{n^m})$ which in turn implies that $\Bbb{E}[\sum_{n=1}^{\infty}\frac{1}{d_n}\overline{\mathcal{Y}}_n^g]<\infty$ and hence $\frac{1}{d_n}\overline{\mathcal{Y}}_n^g\to 0$ almost surely.

\end{proof}

In the unweighted case \cite{Bloom4} T. Bloom proved that for every regular compact set $K\subset \Bbb{C}^m$ and Bernstein-Markov measure $\nu$, every ONB $\mathcal{B}=\{F_j^n\}\in\mathcal{ONB}$ has the property that
$$V_K(z)=\big(\limsup_{\substack{n\to \infty\\ j\in \{1,\dots,d_n\}}}\frac1n\log|F_j^n(z)|\big)^*\ \text{for all}\ z\in \Bbb{C}^m\setminus \hat{K}$$ where $\hat{K}$ denotes the polynomial convex hull of $K$. On the other hand, by the proof of  Theorem \ref{basic} an immediate consequence of Theorem \ref{onb} is that for almost every ONB $\mathcal{B}=\{F_j^n\}\in\mathcal{ONB}$
$$\varphi_e(z)=\big(\limsup_{\substack{n\to \infty\\ j\in \Lambda_n}}\frac1n\log|F_j^n(z)|\big)^*\ \text{for all}\ z\in \Bbb{C}^m.$$
However, we remark that Theorem \ref{onb} is a probabilistic result and the set of ONB which do not fall in its context is non-empty. For example in dimension one, for $\varphi(z)=\frac{|z|^2}{2}$ the $F_j(z)=\sqrt{\frac{n^{j+1}}{\pi j!}}z^j$ form an ONB for $\pn$ with respect to the norm $\|\cdot\|_n$. However, zeros of $F_j$ are not equidistributed with respect to the equilibrium measure. 

\section{Further Generalizations}\label{lineb}
 In the last part of this work we describe a generalization of Theorem \ref{main} to the line bundle setting. Let $M$ be a compact complex projective Hermitian manifold and $L \to M$ be an ample holomorphic line bundle endowed with a smooth (at least $\mathscr{C}^2)$ Hermitian metric $h=e^{-\varphi}$ where $\varphi=\{\varphi_{\alpha}\}$ is a local weight of the metric. The latter means that if $e_{\alpha}$ is a holomorphic frame for $L$ over an open set $U_{\alpha}$ then $|e_{\alpha}|_h=e^{-\varphi_\alpha}$ where $\varphi_{\alpha}\in \mathscr{C}^2(U_{\alpha})$ such that $\varphi_{\alpha}=\varphi_{\beta}+\log|g_{\alpha\beta}|$ and $g_{\alpha\beta}:=e_{\beta}/e_{\alpha}\in\mathcal{O}^*(U_{\alpha}\cap U_{\beta})$ are the transition functions for $L$. Then one can define \textit{global extremal weight} $\varphi_e$ to be 
 \begin{equation}\label{def}
 \varphi_{e}:=\sup\{\psi\ \text{is a psh weight}:  \psi\leq \varphi\ \text{on}\ M\}.
 \end{equation}
It follows that $\varphi_e$ defines a psh weight of the Hermitian metric $h_e:=e^{-\varphi_e}$ on $L$. We denote its curvature current by $dd^c\varphi_e:=dd^c(\varphi_{e,\alpha})$ on $U_{\alpha}$. Note that by the compatibility condition we have $\varphi_{e,\alpha}=\varphi_{e,\beta}+\log|g_{\alpha\beta}|$ and the current $dd^c\varphi_e$ is a globally well-defined positive closed $(1,1)$ current on $M$. Moreover, by \cite{Berman} the \textit{equilibrium measure} 
$$\mu_{\varphi_e}:=(dd^c\varphi_e)^m/m!$$ is supported on the compact set
$$S_{\varphi}:=M_{\varphi}(0)\cap D$$ where 
$M_{\varphi}(0):=\{x\in M: dd^c\varphi(x)>0\}$ and  
$D:=\{x\in M: \varphi(x)=\varphi_e(x)\}.$

The geometric data given above allow us to define a scalar inner product on the vector space of \textit{global holomorphic sections} $H^0(M,L^{\otimes n})$ via
\begin{equation}\label{inp}\langle s_1,s_2\rangle:=\int_X \langle s_1(x),s_2(x)\rangle_{h^{\otimes n}} dV
\end{equation} where $dV$ is a fixed volume form on $M$. We also denote the induced norm by $\|\cdot\|_n$. Next, we fix an ONB $\{S_j^n\}_{j=1}^{d_n}$ for $H^0(M,L^{\otimes n})$ with respect to the inner product (\ref{inp}). Then a \textit{subgaussian random holomorphic section} is of the form
$$S_n:=\sum_{j=1}^{d_n}c_j^nS_j^n$$ where $c_j^n$ are iid (real or complex) subgaussian random variables. This definition induces a $d_n$-fold product probability measure $Prob_n$ on the vector space $H^0(M,L^{\otimes n})$. We also consider the product probability space $\prod_{n=1}^{\infty}\big(H^0(M,L^{\otimes n}), Prob_n\big)$. The arguments in \S 3 carries over to the current geometric setting, in particular almost every sequence of subgaussian random holomorphic sections is quantum ergodic in the sense of \cite{Zel}:

\begin{thm}
Let $M$ be a projective complex manifold and $(L,h)$ be an ample Hermitian holomorphic line bundle endowed with a $\mathscr{C}^2$ metric $h$. Then for almost every sequence in $\prod_{n=1}^{\infty}\big(H^0(M,L^{\otimes n}), Prob_n\big)$  the masses 
\begin{equation}
\frac{1}{d_n}|s_n(z)|^2_{h^{\otimes n}}dV \to d\mu_{\varphi_e}
\end{equation} in the weak-star sense on $S_{\varphi}$.
 Moreover, almost surely in $\prod_{n=1}^{\infty}\big(H^0(M,L^{\otimes n}), Prob_n\big)$ the normalized currents of integration
 $$\frac1n[Z_{s_n}] \to dd^c\varphi_e$$ in the sense of currents.

\end{thm}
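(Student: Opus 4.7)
The plan is to follow the proof of Theorem \ref{main} essentially verbatim, with the pair $(\pn,\langle\cdot,\cdot\rangle_n)$ replaced by $(H^0(M,L^{\otimes n}),\langle\cdot,\cdot\rangle)$ and the weighted modulus $|f_n(z)|^2e^{-2n\varphi(z)}$ replaced by the Hermitian norm $|s_n(x)|^2_{h^{\otimes n}}$. The three ingredients fueling \S\ref{S4} are: (i) near and off diagonal Bergman kernel asymptotics (Theorems \ref{Ber2}–\ref{Ber4}); (ii) the complex Hanson--Wright inequality (Theorem \ref{HWII}); and (iii) a mass-implies-potentials statement (Theorem \ref{basic}). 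Ingredient (ii) is purely abstract. Ingredient (i) in the line bundle setting is exactly the setup in which Berman originally stated these results, so one has $n^{-m}|K_n(x,x)|_{h^{\otimes n}}dV\to \mu_{\varphi_e}$ and the analogous concentration of $|K_n|^2$-mass on the diagonal in $M\times M$. Ingredient (iii) requires a line bundle transcription, which is the main technical point and I return to it below.

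Granting these inputs, I would first, for each bounded continuous $g:M\to\R$, define the Toeplitz operator $T_n^g:=\Pi_n\circ M_g$ on $H^0(M,L^{\otimes n})$ and the random variable
$$X_n^g(s_n):=\int_M g(x)\,|s_n(x)|^2_{h^{\otimes n}}dV=\langle T_n^g s_n,s_n\rangle_n.$$
Propositions \ref{prop2} and \ref{exp} transfer line for line: $\E[X_n^g]=Tr(T_n^g)$, and under the identification $s_n\leftrightarrow Z_n=(c_1^n,\ldots,c_{d_n}^n)\in\Bbb{C}^{d_n}$ the operator $T_n^g$ becomes a Hermitian matrix $A_n^g$ satisfying $\|A_n^g\|\leq \sup_M|g|$ and $\|A_n^g\|_{HS}^2=Tr((T_n^g)^2)=O(d_n)$. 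Applying Theorem \ref{HWII} with $t=\epsilon d_n$ yields
$$Prob_n\bigl[\bigl|\tfrac{1}{d_n}X_n^g(s_n)-\textstyle\int_M g\,d\mu_{\varphi_e}\bigr|>\epsilon\bigr]\leq 2\exp(-C_g d_n),$$
and Borel--Cantelli, combined with the almost sure bound $\|Z_n\|^2\leq d_n^2$ coming from Proposition \ref{sgb}(2), produces a probability one set $\mathcal{A}_g$ on which $\frac{1}{d_n}X_n^g(s_n)\to\int g\,d\mu_{\varphi_e}$. Intersecting over a countable dense subset $\{g_j\}\subset C(S_\varphi)$ gives the weak-star mass equidistribution on $S_\varphi$.

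For the current statement I would reduce to the local picture. Over a trivializing chart $(U_\alpha,e_\alpha)$ write $s_n=f_{n,\alpha}e_\alpha^{\otimes n}$, so that $|s_n|^2_{h^{\otimes n}}=|f_{n,\alpha}|^2e^{-2n\varphi_\alpha}$ and $\frac1n[Z_{s_n}]=\frac1n dd^c\log|f_{n,\alpha}|$ on $U_\alpha$. On each $U_\alpha$ meeting $Int(S_\varphi)$ the mass asymptotics just obtained feed into the argument of Theorem \ref{basic} to give $\frac1n\log|f_{n,\alpha}|\to \varphi_{e,\alpha}$ in $L^1_{loc}(U_\alpha)$; the compatibility $\varphi_{e,\alpha}=\varphi_{e,\beta}+\log|g_{\alpha\beta}|$ then patches the local convergences into weak convergence $\frac1n[Z_{s_n}]\to dd^c\varphi_e$ on all of $M$.

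The main obstacle I anticipate is the line bundle transcription of Theorem \ref{basic}, which rests on a compact Kähler version of the domination principle (Theorem \ref{domp}). On a projective $M$ with $\omega\in c_1(L,h)$ the required statement is: if $\psi\in PSH(M,\omega)$ and $\phi\in\mathcal{E}(M,\omega)$ satisfy $\psi\leq\phi$ a.e.\ with respect to $MA(\phi)$, then $\psi\leq\phi$ everywhere, which follows from the same non-pluripolar Monge--Ampère machinery cited before Corollary \ref{dom}. One must additionally verify that, under the $\mathscr{C}^2$ hypothesis on $\varphi$, Berman's formula $\mu_{\varphi_e}=\mathbbm{1}_{S_\varphi}(dd^c\varphi)^m/m!$ delivers full mass on $Int(S_\varphi)$, so that the analogue of hypothesis (\ref{npmass}) holds. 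Once this is in place, the rest of the proof is a routine translation of \S\ref{S2}--\S\ref{S4}.
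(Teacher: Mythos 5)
Your proposal is correct and takes essentially the same route as the paper, whose own proof of this theorem consists of the remark that the arguments of \S\ref{S3}--\S\ref{S4} (Berman's Bergman kernel asymptotics, stated originally in the line bundle setting, Hanson--Wright concentration plus Borel--Cantelli, and the domination-principle-based mass-to-zeros argument of Theorem \ref{basic}, with Dinew's principle applied on the compact manifold) carry over verbatim to sections of $L^{\otimes n}$. One minor point: for the density argument over the countable family $\{g_j\}$ you need $\frac{1}{d_n}\|s_n\|_n^2=O(1)$ almost surely, i.e.\ $\|Z_n\|^2\leq Cd_n$ eventually (as in (\ref{nc})), rather than the weaker bound $\|Z_n\|^2\leq d_n^2$ you quote; this follows from the same subgaussian tail estimate (or from Hanson--Wright applied to the identity matrix).
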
 


\end{document}